\numberwithin{equation}{section} \DeclareMathSizes{2}{10}{12}{13}
\newtheorem{thm}{Proposition}[section]
\newtheorem{Thm}[thm]{Theorem}
\newtheorem{rem}[thm]{Remark}
\newtheorem{cor}[thm]{Corollary}
\newtheorem{lem}[thm]{Lemma}
\newtheorem{defn}[thm]{Definition}
\numberwithin{thm}{section} 
\title{Hilbert spaces over $C^*$-tensor categories, Fredholm modules and cyclic cohomology}
\author{Abhishek Banerjee\footnote{Department of Mathematics, Indian Institute of Science, Bangalore, India. Email: abhishekbanerjee1313@gmail.com}  $\qquad\qquad$ Subhajit Das \footnote{Department of Mathematics, Indian Institute of Science, Bangalore, India. Email: subhajitdas@iisc.ac.in} $\qquad\qquad$ Surjeet Kour\footnote{Department of Mathematics, Indian Institute of Technology, Delhi, India. Email: koursurjeet@gmail.com} }
\date{}
\begin{document}

\maketitle 

\medskip

\begin{abstract} 
We construct Fredholm modules over an algebra taking values in generalized Hilbert spaces over a rigid $C^*$-tensor category. Using methods of Connes, we obtain Chern characters taking values in cyclic cohomology. These Chern characters are well behaved with respect to the periodicity operator, and depend only on the homotopy class of the Fredholm module.
\end{abstract}

\medskip
MSC(2020) Subject Classification: 16E40, 18D10, 47A53, 58B34

\medskip
Keywords : Fredholm modules, $C^*$-tensor categories.

\hypersetup{linktocpage}

\tableofcontents

\section{Introduction}

A $C^*$-tensor category is a monoidal category with certain additional structures that are motivated by the definition of a $C^*$-algebra. This includes a ``dagger structure'' on its hom spaces, as well as norms on morphisms that satisfy the $C^*$-identity. In recent years, objects such as $C^*$-tensor categories and fusion categories have been studied by a number of authors. The results obtained have provided a beautiful mix of representation theory, categorical algebra and functional analysis (see, for instance, \cite{Chen}, \cite{GLR}, \cite{GhJo}, \cite{HePe}, \cite{JP}, \cite{JL}, \cite{LR}). 

\smallskip
Our aim is to take this idea further. Our starting point in this paper is the work of Jones and Penneys \cite{JP}, which introduces a notion of generalized Hilbert space over a $C^*$-tensor category $\mathscr X$. The category of Hilbert spaces over $\mathscr X$ is denoted by  $\mathbf{Hilb}(\mathcal X)$. We combine this with the methods of Connes \cite{Con2}, \cite{Con}, \cite{Con4} in noncommutative geometry. In particular, Connes \cite{Con} used $p$-summable Fredholm modules over algebras to obtain classes in cyclic cohomology. In this paper, we introduce Fredholm modules taking values in generalized Hilbert spaces over the rigid $C^*$-tensor category  $\mathscr X$. Our main results are as follows. First, we use the methods analogous to Connes \cite{Con} to obtain ``cycles'' over algebras corresponding to  Fredholm modules constructed over $\mathbf{Hilb}(\mathcal X)$. This gives Chern characters taking values in cyclic cohomology. These characters are well behaved with respect to the periodicity operator obtained by means of taking the cup product with a certain distinguished element in cyclic cohomology. Finally, we show that the Chern character of such a Fredholm module depends only on its homotopy class. We also mention here our previous work in \cite{BB}, where we studied Fredholm modules over small preadditive categories, along with classes induced in cyclic cohomology.

\section{Preliminaries}

\smallskip
We always let $\mathscr X$ be a rigid $\rm C^*$-tensor category (see for instance, \cite[$\S$ 2.3]{JP}). More explicitly, we assume that:

\smallskip
(1) $\mathscr X$ is a tensor category, that is, an abelian linear monoidal category $(\mathscr X, \otimes, 1)$ with the hom-spaces being finite dimensional complex vector spaces. Throughout, we will assume that the monoidal structure on $\mathscr X$ is strict. 

\smallskip
(2) $\mathscr X$ has a dagger structure, that is, for every pair $a, b \in Ob(\mathscr X)$, there is an  antilinear map $(\_)^* : \mathscr X(a, b) \longrightarrow \mathscr X(b,a)$, called the adjoint. The adjoint satisfies $f^{**}=f$ for any morphism $f\in \mathscr X$ and $(g \circ f)^* = f^* \circ g^*$ for composable morphisms $f,g$ in $\mathscr X$. An isomorphism $f \in \mathscr X$ is said to be unitary if $f^{-1} = f^*$.

\smallskip
(3) The tensor structure and the dagger structure are compatible in the sense that, the associators and unitors of the monoidal structure are unitary isomorphisms and $(g \otimes f)^{*} = g^{*} \otimes f^{*}$ for all morphisms $f$ and $g$.

\smallskip
(4) The dagger structure of $\mathscr X$ is $\rm C^*$, that is, (i) For every $a, b \in \mathscr X$, $f \in \mathscr X(a,b)$, there is a $g \in \mathscr X(a, a)$ such that $f^* \circ f = g^* \circ g$ and (ii) For each $a, b \in \mathscr X$, the function $||.|| : \mathscr X(a, b) \longrightarrow [0, \infty)$ defined by $||f||^2 := sup\{|\xi| \geq 0 : f^* \circ f - \xi 1_a\text{ is not invertible}\}$ defines a complete norm on $\mathscr X(a, b)$. These norms are submultiplicative, i.e., $||g \circ f|| \leq ||g||.||f||$ for all composable morphisms $f, g$, and satisfy the $\rm C^*-$identity  $||f^* \circ f|| = ||f||^2$ for all morphisms $f \in \mathscr X$.

\smallskip
(5) The tensor structure is rigid, that is, every object $a \in \mathscr X$ has a dual object $(a^{\vee}, ev_a : a^{\vee} \otimes a \longrightarrow 1, coev_a : 1 \longrightarrow a \otimes a^{\vee})$ and an object $a_{\vee}$ (called a predual of $a$) such that $(a_{\vee})^{\vee} \cong a$.

\smallskip
We recall that an object $c \in \mathscr X$ is called simple if it has no subobjects other than $0$ and itself. It then follows by Schur's Lemma that if $c$ and $c'$ are two isomorphic (resp. non-isomorphic) simple objects in $\mathscr X$, then $\mathscr X(c, c') \cong \mathbb{C}$ (resp. $\mathscr X(c, c') \cong 0$). We also assume that the unit object $1$ of $\mathscr X$ is simple.

\smallskip
We now highlight some consequences of these assumptions :

\smallskip
(1) $\mathscr X$ is a semisimple category (see for instance, \cite{GLR}), that is, every object in $\mathscr X$ is a finite direct sum of simple objects.

\smallskip
(2) $\mathscr X$ has a canonical bi-involutive structure (see for instance, \cite[$\S$ 2.2, 2.3]{JP})
\begin{equation}\label{E2.1}
\left( \overline{\cdot} : \mathscr X \longrightarrow \mathscr X,\text{ }(\varphi_c : c \xrightarrow{\sim} \overline{\overline{c}})_{c \in \mathscr X},\text{ }(\nu_{a, b} : \overline{a} \otimes \overline{b} \xrightarrow{\sim} \overline{b \otimes a})_{a, b \in \mathscr X},\text{ }r : 1 \xrightarrow{\sim} \overline{1}\right)
\end{equation}  and $\overline{a}$ is a dual object of $a$ for any $a \in \mathscr X$. Further, we have 
\begin{equation}\label{2.3d} coev_a^* \circ (f \otimes 1_{\overline{a}}) \circ coev_{a} = ev_a \circ (1_{\overline{a}} \otimes f) \circ ev_a^*
\end{equation} for all morphisms $f \in \mathscr X(a,a)$. In particular, for any object $a \in \mathscr X$, setting $f = id_a$ in \eqref{2.3d} gives a scalar multiple $d_a\cdot id_1$ of the identity. Then $d_a\in \mathbb C$   is called the quantum dimension   of the object $a$.

\smallskip
(3) Each hom space $\mathscr X(a, b)$ has a Hilbert space structure (see \cite[Definition 2.15]{JP} given by,
\begin{equation}\label{E1}
  \langle f, g\rangle_{\mathscr X(a, b)}\text{ }:=\text{ }1 \xrightarrow{coev_a} a \otimes \overline{a} \xrightarrow{f \otimes 1_{\overline{a}}} b \otimes \overline{a} \xrightarrow{g^* \otimes 1_{\overline{a}}} a \otimes \overline{a} \xrightarrow{\varphi_a \otimes 1_{\overline{a}}} \overline{\overline{a}} \otimes \overline{a} \xrightarrow{ev_{\overline{a}}} 1
\end{equation} We note that $\mathscr X(1,1) \cong \mathbb{C}$ since $1$ is assumed to be simple. Thus, the composite morphism $1 \longrightarrow 1$ in $\eqref{E1}$ can be identified with a scalar. 

\smallskip
We now fix a set $Irr(\mathscr X)$ of representatives of isomorphism classes of simple objects in $\mathscr X$. Let $\mathbf{Vec}$ denote the category of complex vector spaces. Let $\mathbf{Vec}(\mathscr X)$ (see \cite[$\S$ 2.4]{JP}) denote the category of linear functors $\mathscr X^{op} \longrightarrow \mathbf{Vec}$. 
\smallskip
Let $\mathbf{Hilb}$ denote the dagger tensor category of  separable  complex Hilbert spaces and bounded linear operators.   
 The objects of the category $\mathbf{Hilb}(\mathscr X)$ are linear dagger functors $\mathbf  H:\mathscr X^{op} \longrightarrow \mathbf{Hilb}$ with morphisms given by setting for any
 $\mathbf  H$, $\mathbf  K \in \mathbf{Hilb}(\mathscr X)$  (see \cite[$\S$ 2.6]{JP})
 \begin{equation}\label{26f}
 \mathbf{Hilb}(\mathscr X)(\mathbf  H, \mathbf  K):=\{\mbox{$\theta\in Nat(\mathbf  H, \mathbf  K)$, $sup_{c \in \mathscr X} |\theta_c|< \infty$}\}
 \end{equation} In $\eqref{26f}$, each $|\theta_c|$ denotes the norm of the bounded linear operator $\theta_c:\mathbf {H}(c)\longrightarrow \mathbf {K}(c)$ and $Nat(\mathbf {H},\mathbf {K})$ denotes the natural transformations from $\mathbf {H}$ to $\mathbf {K}$.
\section{$p$-Schatten classes and Fredholm modules}
For Hilbert spaces $H$, $H'$, let $B(H,H')$ be the collection of bounded linear operators from $H$ to $H'$ and let $K(H,H')\subseteq B(H,H')$ be the subspace of compact operators. For an operator $T\in K(H,H')$, its $p$-Schatten norm (for $p\geq 1$) is given by
\begin{equation}\label{sct2}
||T||_p:=\left(\underset{n\geq 1}{\sum} s^p_n(T)\right)^{1/p}
\end{equation} where $s_1(T)\geq s_2(T)\geq .... $ is the sequence of singular values of $T$, i.e., the eigenvalues of the operator $\sqrt{T^*T}$ (see, for instance, ). The $p$-th Schatten class $S_p(H,H')\subseteq K(H,H')$ consists of all operators $T\in K(H,H')$ such that $||T||_p$ is finite. 

\begin{defn}\label{D3.1} Let $p\in [1,\infty)$. 
For $\mathbf {H}, \mathbf {H}' \in \mathbf{Hilb}(\mathscr X)$, we define the $p$-Schatten class $\mathcal S_p(\mathbf {H}, \mathbf {H}')\subseteq \mathbf{Hilb}(\mathscr X)(\mathbf H,\mathbf H')$ to be  $$\mathcal S_p(\mathbf {H}, \mathbf {H}') := \{ \mbox{$\theta \in \mathbf{Hilb}(\mathscr X)(\mathbf {H}, \mathbf {H}')$ $\vert$ $\theta_c\in 
K(H(c),H'(c))$ for all $c\in \mathscr X$ and $sup_{c \in Irr(\mathscr X)} ||\theta_c||_p < \infty$}\}$$ 
\end{defn}
If $\mathbf H = \mathbf H'$, we will write $\mathcal S_p(\mathbf H)$ for $\mathcal S_p(\mathbf H, \mathbf H)$.
\begin{rem}
   The semisimplicity of $\mathscr X$ implies that if $\theta \in \mathcal{S}_p(\mathbf H, \mathbf H')$, then for every $c \in \mathscr X$, $\theta_c$ is a finite direct sum of $p$-Schatten operators and hence is $p$-Schatten. 
\end{rem}
\begin{thm}\label{P1}
 Let $\mathbf {H}, \mathbf {H}', \mathbf {H}'' \in \mathbf{Hilb}(\mathscr X)$.

\smallskip
(1) $\mathcal S_p(\mathbf {H}, \mathbf {H}')$ is a subspace of $\mathbf{Hilb}(\mathscr X)(\mathbf {H}, \mathbf {H}')$.\\
(2)  Let $\theta \in \mathcal S_p(\mathbf {H}, \mathbf {H}')$, $\theta^{'} \in \mathbf{Hilb}(\mathscr X)(\mathbf {H}', \mathbf {H}'')$ and $\theta^{''} \in \mathbf{Hilb}(\mathscr X)(\mathbf {H}'', \mathbf {H})$. Then, $\theta^{'} \circ \theta \in \mathcal S_p(\mathbf {H}, \mathbf {H}'')$ and $\theta \circ \theta^{''} \in \mathcal S_p(\mathbf {H}'', \mathbf {H}')$.\\
(3) $\mathcal S_p(\mathbf H) \subseteq \mathcal S_q(\mathbf H)$ for all $1 \leq p \leq q$.\\
(4) Let $p, q, r \geq 1$ such that $1/r = 1/p + 1/q$. Then, for $\theta \in \mathcal{S}_p(\mathbf H, \mathbf H'), \theta' \in \mathcal{S}_q(\mathbf H', \mathbf H'')$, $\theta' \circ \theta \in \mathcal S_r(\mathbf H, \mathbf H'')$.
\end{thm}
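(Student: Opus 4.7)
The plan is to reduce each of the four assertions to the corresponding classical fact about $p$-Schatten operators between ordinary Hilbert spaces, applied componentwise to $\theta_c : \mathbf{H}(c) \to \mathbf{H}'(c)$, and then to verify that the bound is preserved when we pass to the supremum over $c \in Irr(\mathscr X)$. The Remark preceding the statement (via semisimplicity of $\mathscr X$) lets me restrict attention to simple objects throughout: once each component at a simple object is $p$-Schatten with uniformly bounded Schatten norm, the component at any $c \in \mathscr X$ is a finite direct sum of Schatten operators (since $\mathbf{H}$ and $\mathbf{H}'$ are linear functors and $\theta$ is natural), hence Schatten.

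For Part (1), I use that the classical $p$-Schatten operators between two Hilbert spaces form a Banach space and in particular satisfy the triangle inequality $\|A + B\|_p \leq \|A\|_p + \|B\|_p$. Applied componentwise, this shows that $\mathcal S_p(\mathbf H, \mathbf H')$ is closed under addition and scalar multiplication, with
\[
\sup_{c \in Irr(\mathscr X)} \|(\theta + \lambda \eta)_c\|_p \;\leq\; \sup_{c} \|\theta_c\|_p + |\lambda| \sup_{c} \|\eta_c\|_p \;<\; \infty.
\]
For Part (2), I appeal to the classical Banach-ideal property of the Schatten classes: if $A \in S_p(H, H')$ and $B \in B(H', H'')$, then $BA \in S_p(H, H'')$ with $\|BA\|_p \leq \|B\|_{\mathrm{op}} \|A\|_p$, and symmetrically on the right. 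Componentwise this yields
\[
\|(\theta' \circ \theta)_c\|_p \;\leq\; |\theta'_c| \cdot \|\theta_c\|_p \;\leq\; \bigl(\sup_{c'} |\theta'_{c'}|\bigr) \bigl(\sup_{c'} \|\theta_{c'}\|_p\bigr),
\]
and the right-hand side is finite by the definition of a morphism in $\mathbf{Hilb}(\mathscr X)$ from \eqref{26f} together with Definition \ref{D3.1}. The case $\theta \circ \theta''$ is completely symmetric.

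Part (3) reduces to the classical inclusion $S_p \subseteq S_q$ for $1 \leq p \leq q$, with $\|T\|_q \leq \|T\|_p$, applied to each $\theta_c$ and then to the supremum. Part (4) is the Hölder inequality for Schatten classes, $\|BA\|_r \leq \|A\|_p \|B\|_q$ when $1/r = 1/p + 1/q$, applied to $(\theta' \circ \theta)_c = \theta'_c \circ \theta_c$ followed by the same supremum argument. There is no substantive obstacle in any of the four parts: the content of the proposition is precisely that the categorical definition of $\mathcal S_p$ inherits the Banach-ideal structure of the classical Schatten classes by pointwise evaluation. The only piece of bookkeeping — that $p$-Schattenness is required at every object of $\mathscr X$ while the norm is controlled only via a supremum over $Irr(\mathscr X)$ — is already handled by the semisimplicity remark, so the proof should amount to little more than recording the classical inequality at simple objects in each case.
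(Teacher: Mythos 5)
Your proposal is correct and follows essentially the same route as the paper: each part is reduced to the corresponding classical Schatten-class fact (triangle inequality, ideal property with $\|BA\|_p\leq\|B\|_{\mathrm{op}}\|A\|_p$, monotonicity in $p$, and H\"older) applied componentwise at objects of $Irr(\mathscr X)$, followed by passage to the supremum, with the semisimplicity remark covering compactness at arbitrary objects. The only cosmetic difference is in part (1), where the paper invokes the weaker quasi-norm bound $\|A+B\|_p\leq 2^{1/p}(\|A\|_p+\|B\|_p)$ while you use the genuine triangle inequality valid for $p\geq 1$; both suffice.
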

\begin{proof}
(1) Let $\theta_1, \theta_2 \in \mathcal S_p(\mathbf {H}, \mathbf {H}')$. For any object $c \in Irr(\mathscr X)$, since $(\theta_1)_c, (\theta_2)_c : \mathbf {H}(c) \longrightarrow \mathbf {H}'(c)$ are $p$-Schatten operators, hence $||(\theta_1)_c + (\theta_2)_c||_p \leq 2^{1/p}||(\theta_1)_c||_p + 2^{1/p}||(\theta_2)_c||_p$ (see for instance, \cite[$\S$ 2]{Simon}). Therefore,
\begin{equation}
    \underset{c \in Irr(\mathscr X)}{sup}||(\theta_1 + \theta_2)_c||_p \leq 2^{1/p}\underset{c \in Irr(\mathscr X)}{sup}||(\theta_1)_c||_p + 2^{1/p}\underset{c \in Irr(\mathscr X)}{sup}||(\theta_2)_c||_p < \infty.
\end{equation} This shows that $\theta_1 + \theta_2 \in \mathcal S_p(\mathbf {H}, \mathbf {H}')$. It is easy to see that $\mathcal S_p(\mathbf {H}, \mathbf {H}')$ is closed under scalar multiplication.

\smallskip
(2)  Since $\theta'\in \mathbf{Hilb}(\mathscr X)(\mathbf H',\mathbf H'')$, it follows by \eqref{26f} that $\underset{c \in \mathscr X}{sup} |\theta^{'}_{c}|<\infty$. Since $\theta\in \mathcal S_p(\mathbf H,\mathbf H')$, it follows from  Definition \ref{D3.1} that $\underset{c \in Irr(\mathscr X)}{sup} ||\theta_{c}||_p<\infty$.  For each $c\in Irr(\mathscr X)$, we know that 
$||\theta'_c\circ \theta_c||_p\leq |\theta'_c|\cdot ||\theta_c||_p$. Hence, $\underset{c \in Irr(\mathscr X)}{sup} ||(\theta' \circ \theta)_c||_p \leq \underset{c \in Irr(\mathscr X)}{sup} |\theta^{'}_{c}|.\underset{c \in Irr(\mathscr X)}{sup} ||\theta_c||_p < \infty$. The proof is similar for $\theta \circ \theta^{''}$.

\smallskip
(3) Let $\theta \in \mathcal S_p(\mathbf H)$. Since $q \geq p$, $||\theta_c||_q \leq ||\theta_c||_p$ for all $c \in Irr(\mathscr X)$ (see for instance \cite[$\S$ 2]{Simon}. Hence, $\underset{c \in Irr(\mathscr X)}{sup} ||\theta_c||_q \leq \underset{c \in Irr(\mathscr X)}{sup} ||\theta_c||_p < \infty$. It follows that $\theta \in \mathcal S_q(\mathbf H)$.

\smallskip
(4) We note that for every $c \in \mathscr X$, $(\theta' \circ \theta)_c = \theta'_c \circ \theta_c \in K(\mathbf H(c), \mathbf H''(c))$. Further, for every $c \in Irr(\mathscr X)$, $\theta_c \in \mathcal{S}_p(\mathbf H(c), \mathbf H'(c))$ and $\theta'_c \in \mathcal{S}_q(\mathbf H'(c), \mathbf H''(c))$. It follows from \cite[Appendix 1]{Con} that $\theta'_c \circ \theta_c \in \mathcal{S}_r(\mathbf H(c), \mathbf H''(c))$ and $||\theta'_c \circ \theta_c||_r \leq ||\theta'_c||_q.||\theta_c||_p$. Hence,
\begin{equation}
  \underset{c \in Irr(\mathscr X)}{sup}||\theta'_c \circ \theta_c||_r \leq \underset{c \in Irr(\mathscr X)}{sup}||\theta'_c||_q.\underset{c \in Irr(\mathscr X)}{sup}||\theta_c||_p < \infty
\end{equation}
This proves the result.
\end{proof}
\begin{cor}\label{C3.4}
For $\mathbf {H} \in \mathbf{Hilb}(\mathscr X)$, $\mathcal S_p(\mathbf H)$ is an ideal of $\mathbf{Hilb}(\mathscr X)(\mathbf {H}, \mathbf {H})$.
\end{cor}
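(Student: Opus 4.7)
The plan is to observe that this corollary is an immediate specialization of Proposition \ref{P1} to the case $\mathbf H' = \mathbf H'' = \mathbf H$. First, by part (1) of Proposition \ref{P1}, $\mathcal S_p(\mathbf H) = \mathcal S_p(\mathbf H, \mathbf H)$ is a subspace of $\mathbf{Hilb}(\mathscr X)(\mathbf H, \mathbf H)$, so in particular it is an additive subgroup.

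It remains to verify the two-sided ideal property. Given $\theta \in \mathcal S_p(\mathbf H)$ and an arbitrary element $\eta \in \mathbf{Hilb}(\mathscr X)(\mathbf H, \mathbf H)$, I would apply part (2) of Proposition \ref{P1}: taking $\theta' = \eta \in \mathbf{Hilb}(\mathscr X)(\mathbf H', \mathbf H'')$ with $\mathbf H' = \mathbf H'' = \mathbf H$ yields $\eta \circ \theta \in \mathcal S_p(\mathbf H, \mathbf H) = \mathcal S_p(\mathbf H)$, establishing closure under left multiplication. Taking instead $\theta'' = \eta$ with $\mathbf H'' = \mathbf H = \mathbf H'$, the second assertion of part (2) gives $\theta \circ \eta \in \mathcal S_p(\mathbf H)$, establishing closure under right multiplication.

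There is essentially no obstacle here, since Proposition \ref{P1} already carries all the analytic content (bounded times $p$-Schatten stays $p$-Schatten with the norm inequality $\|T S\|_p \leq |T|\cdot \|S\|_p$, applied componentwise and preserved under taking the supremum over $Irr(\mathscr X)$). The only subtlety worth flagging is that part (2) is stated for possibly distinct Hilbert spaces $\mathbf H, \mathbf H', \mathbf H''$, so one should be explicit that setting all three equal to $\mathbf H$ is a valid instance and that the resulting inclusions $\mathcal S_p(\mathbf H, \mathbf H) \subseteq \mathcal S_p(\mathbf H, \mathbf H)$ give precisely the two ideal conditions. Combining subspace-ness with left and right absorption completes the proof.
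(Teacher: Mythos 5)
Your proposal is correct and is exactly the paper's argument: the paper's proof of Corollary \ref{C3.4} simply sets $\mathbf H = \mathbf H' = \mathbf H''$ in Proposition \ref{P1}, using part (1) for the subspace structure and part (2) for two-sided absorption. You have merely spelled out the same specialization in more detail.
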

\begin{proof}
  The result is clear by setting $\mathbf {H} = \mathbf {H}' = \mathbf {H}''$ in Proposition \ref{P1}. 
\end{proof}
We are now ready to define $p$-summable Fredholm modules in terms of actions of algebras on objects of $\mathbf{Hilb}(\mathscr X)$.
\begin{defn}  Let $A$ be an algebra over $\mathbb C$ and $p\geq 1$. 
  A $p$-summable Fredholm module over $A$ is a pair $(\mathbf {H}, \mathbf {F})$ which consists of the following data

\smallskip
(1) A $\mathbb{Z}_2$-graded object  $\mathbf  H = \mathbf {H}^+ \oplus \mathbf {H}^-$  in $\mathbf{Hilb}(\mathscr X)$ with grading operator $\varepsilon : \mathbf  H \longrightarrow \mathbf  H$, where $\varepsilon_a(h) = (-1)^{deg h}h$ for all $a \in Ob(\mathscr X)$, $h \in \mathbf {H}^{\pm}(a)$. 

\smallskip
(2) A morphism of graded algebras $\rho : A \longrightarrow \mathbf{Hilb}(\mathscr X)(\mathbf  H, \mathbf H)$ that is of degree zero. Here, $A$ is treated as a $\mathbb Z_2$-graded algebra that is concentrated in degree zero. 

\smallskip
(3) An element $\mathbf  F \in \mathbf{Hilb}(\mathscr X)(\mathbf  H, \mathbf H)$ such that

\smallskip
(i) $\mathbf {F} \circ \mathbf {F} = id$. 

\smallskip
(ii) $\mathbf {F} \circ \varepsilon = - \varepsilon \circ \mathbf {F}$.

\smallskip
(iii) $\mathbf {F} \circ \rho(a) - \rho(a) \circ \mathbf {F} \in \mathcal S_p(\mathbf H, \mathbf H)$ for all $a \in A$.
\end{defn}

\section{Characters of Fredholm modules}
For the rest of the paper, we shall assume that $Irr(\mathscr X)$ is finite.

\smallskip
Let $(\Omega, d)$ be a differential graded algebra where $\Omega = \underset{j \geq 0}{\bigoplus} \Omega^j$ is a graded algebra over $\mathbb C$ together with a graded derivation $d$ of degree $1$ such that $d^2 = 0$. We recall (see \cite[$\S$ 1]{Con}) that a closed graded trace of dimension $n \geq 0$ on $(\Omega, d)$ is a linear map $\int : \Omega^n \longrightarrow \mathbb C$ such that
\begin{equation}
    \int d\omega = 0\text{ }\text{ for all }\omega \in \Omega^{n-1}\quad\text{and}\quad\int \omega_1\omega_2 = (-1)^{ij}\int \omega_2\omega_1\text{ }\text{ for all }\omega_1 \in \Omega^i, \omega_2 \in \Omega^j\text{ with }i,j \geq 0, i+j = n
\end{equation}
  The tuple $(\Omega, d, \int)$ is called a cycle of dimension $n$. Further, if $A$ is an algebra over $\mathbb C$, a cycle of dimension $n$ on $A$ is a tuple $(\Omega, d, \int, \rho)$ where $(\Omega, d, \int)$ is a cycle of dimension $n$ and $\rho : A \longrightarrow \Omega^0$ is a morphism of algebras.

\medskip
We now fix a trivially $\mathbb Z_2$-graded algebra $A$ over $\mathbb C$ and a $p$-summable Fredholm module $(\mathbf H, \mathbf F)$ over $A$ for some $p \geq 1$. In this section, we shall associate to $(\mathbf H, \mathbf F)$ an $n$-dimensional cycle over $A$, where $n = 2m$ is any even integer such that $n \geq p-1$.

\smallskip
Let $\widetilde{A} := A \oplus \mathbb C$ be the unital algebra obtained from $A$ by adjoining a unit. The action $\rho$ of $A$ on $\mathbf H$ extends uniquely to an action of the unital algebra $\widetilde{A}$ on $\mathbf H$, which we continue to denote by $\rho$. For any $\theta \in \mathbf{Hilb}(\mathscr X)(\mathbf H, \mathbf H)$, we set $d\theta = i[F, \theta]$, where the commutator is graded. In particular, if $\theta$ is homogeneous,
\begin{equation}
    [F, \theta] = F\theta - (-1)^{deg \theta}\theta F
\end{equation}
For any $j \in \mathbb N$, we set $\Omega^j$ to be the linear span in $\mathbf{Hilb}(\mathscr X)(\mathbf H, \mathbf H)$ of elements of the form
\begin{equation}
    \rho(a_0) d\rho(a_1) d\rho(a_2) ... d\rho(a_j),\quad a_j \in \widetilde{A}
\end{equation}
\begin{lem}\label{Lemm4.1}
   (1) $d^2\theta = 0$ for all $\theta \in \mathbf{Hilb}(\mathscr X)(\mathbf H, \mathbf H)$.

   \smallskip
   (2) $d(\theta_1\theta_2) = (d\theta_1)\theta_2 + (-1)^{deg \theta_1}\theta_1(d\theta_2)$ for all $\theta_1, \theta_2 \in \mathbf{Hilb}(\mathscr X)(\mathbf H, \mathbf H)$.

   \smallskip
   (3) $d\Omega^j \subseteq \Omega^{j+1}$.

   \smallskip
   (4) $\Omega^j\Omega^k \subseteq \Omega^{j+k}$ for each $j, k \in \mathbb N$.
\end{lem}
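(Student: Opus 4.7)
\textbf{Proof proposal for Lemma \ref{Lemm4.1}.} The approach is to unpack the definition $d\theta = i[\mathbf F,\theta]$ and exploit the two defining properties of $\mathbf F$, namely $\mathbf F^2 = id$ and $\mathbf F\circ\varepsilon = -\varepsilon\circ\mathbf F$, so that $\mathbf F$ is odd under the $\mathbb Z_2$-grading on $\mathbf{Hilb}(\mathscr X)(\mathbf H,\mathbf H)$. By linearity, I may assume throughout that the elements $\theta,\theta_1,\theta_2$ are homogeneous whenever degrees appear.

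For (1), I would compute $d^2\theta = -[\mathbf F,[\mathbf F,\theta]]$, noting that $[\mathbf F,\theta]$ has degree $\deg\theta + 1$. Expanding the outer graded commutator and using this degree count, the two cross terms $\pm(-1)^{\deg\theta}\mathbf F\theta\mathbf F$ cancel, and the remaining terms collapse to $\mathbf F^2\theta - \theta\mathbf F^2 = 0$ by $\mathbf F^2 = id$. For (2), I would write out $i[\mathbf F,\theta_1\theta_2]$ and add and subtract $i(-1)^{\deg\theta_1}\theta_1\mathbf F\theta_2$, after which the expression splits as $(d\theta_1)\theta_2 + (-1)^{\deg\theta_1}\theta_1(d\theta_2)$. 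This is the usual graded Leibniz identity for an inner derivation by an odd element.

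Part (3) is then immediate from (1) and (2): for $\omega = \rho(a_0)\,d\rho(a_1)\cdots d\rho(a_j) \in \Omega^j$, iterated application of the Leibniz rule and the fact that $d^2\rho(a_k) = 0$ for each $k \geq 1$ leaves only the single surviving term $d\rho(a_0)\,d\rho(a_1)\cdots d\rho(a_j)$, which lies in $\Omega^{j+1}$. (Note that $\rho(a_k)$ has degree $0$, so the graded signs in the Leibniz rule are all trivial here.)

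The only step requiring genuine bookkeeping is (4), which I would reduce to the following claim, proved by induction on $j$: for every $b \in \widetilde A$, one has $\Omega^j\cdot\rho(b) \subseteq \Omega^j$. The base case $j=0$ is clear from $\rho(a_0)\rho(b) = \rho(a_0 b) \in \Omega^0$. For the inductive step, using (2) in reverse I rewrite
\begin{equation}
d\rho(a_j)\cdot\rho(b) \;=\; d\bigl(\rho(a_j)\rho(b)\bigr) - \rho(a_j)\,d\rho(b) \;=\; d\rho(a_j b) - \rho(a_j)\,d\rho(b),
\end{equation}
which pushes $\rho(b)$ one step to the left at the cost of splitting into two terms. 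The first keeps the number of $d$'s unchanged and lies in $\Omega^j$; in the second, the prefix $\rho(a_0)\,d\rho(a_1)\cdots d\rho(a_{j-1})\rho(a_j)$ lies in $\Omega^{j-1}$ by the inductive hypothesis, and multiplying on the right by $d\rho(b)$ lands back in $\Omega^j$ directly from the definition. Given the claim, (4) follows: to multiply $\rho(a_0)\,d\rho(a_1)\cdots d\rho(a_j) \in \Omega^j$ by $\rho(b_0)\,d\rho(b_1)\cdots d\rho(b_k) \in \Omega^k$, I absorb $\rho(b_0)$ into the first factor using the claim, after which the remaining chain $d\rho(b_1)\cdots d\rho(b_k)$ simply concatenates onto the end, producing an element of $\Omega^{j+k}$.

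I do not anticipate a substantive obstacle; the only delicate point is the inductive accounting in (4), where one must verify that pushing a $\rho(b)$ past a $d\rho(a_j)$ never increases the number of $d$'s beyond $j$, which is exactly what the Leibniz identity (2) guarantees.
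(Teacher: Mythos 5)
Your proposal is correct and follows essentially the same route as the paper: the same graded-commutator expansions for (1) and (2), and the same identity $d\rho(a_j)\rho(b) = d(\rho(a_j)\rho(b)) - \rho(a_j)\,d\rho(b)$ driving the induction for (4). You merely spell out the induction and the concatenation step that the paper leaves implicit.
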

\begin{proof}
  (1) If $\theta$ is homogeneous,
  \begin{equation}
    \begin{split}
      d^2\theta &= i[\mathbf F, d\theta] = i[\mathbf F, i[\mathbf F, \theta]] = i[\mathbf F, i\left(\mathbf F\theta - (-1)^{deg \theta}\theta \mathbf F\right)]\\
      &= -\left(\mathbf F(\mathbf F\theta - (-1)^{deg \theta}\theta \mathbf F) - (-1)^{deg \theta + 1}(\mathbf F \theta - (-1)^{\deg \theta}\theta \mathbf F)\mathbf F\right)\\
      &= -\left(\theta - (-1)^{deg \theta}\mathbf F\theta\mathbf F - (-1)^{deg \theta + 1}\mathbf F\theta\mathbf F - \theta\right) = 0.
    \end{split}
  \end{equation}
  Hence, $d^2\theta = 0$ for all $\theta \in \mathbf{Hilb}(\mathscr X)(\mathbf H, \mathbf H)$.

  \smallskip
  (2) It suffices to assume that $\theta_1, \theta_2$ are homogeneous. Then,
  \begin{equation}
    \begin{split}
        d(\theta_1\theta_2) &= i[\mathbf F, \theta_1\theta_2] = i\left(\mathbf F\theta_1\theta_2 - (-1)^{deg \theta_1 + deg \theta_2}\theta_1\theta_2\mathbf F\right)\\
        &= i(\mathbf F\theta_1 - (-1)^{deg \theta_1}\theta_1\mathbf F)\theta_2 + i\left((-1)^{deg \theta_1}\theta_1\mathbf F\theta_2 - (-1)^{deg \theta_1+\deg \theta_2}\theta_1\theta_2\mathbf F\right)\\
        &= (d\theta_1)\theta_2 + (-1)^{deg \theta_1}\theta_1(d\theta_2).
    \end{split}
  \end{equation}
  (3) Using (1) and (2), the result is clear.

  \smallskip
  (4) It is enough to show that for $a_0, a_1, ..., a_j, a \in \widetilde{A}$, $\left(\rho(a_0)d\rho(a_1)..d\rho(a_j)\right)\rho(a) \in \Omega^j$. Since $A$ is trivially $\mathbb Z_2$-graded and $\rho : A \longrightarrow \mathbf{Hilb}(\mathscr X)(\mathbf H, \mathbf H)$ is of degree $0$, we have $\left(d\rho(a_j)\right)\rho(a) = d(\rho(a_j)\rho(a)) - \rho(a_j)d\rho(a)$. The result now follows using an induction argument.
\end{proof}
Using Lemma \ref{Lemm4.1}, we see that
\begin{equation}\label{difgradal}
    \left(\Omega := \underset{0 \leq j}{\bigoplus} \Omega^j, d\right)
\end{equation}has the structure of a differential graded algebra.

\smallskip
\begin{lem}\label{lemm4.2}
     $\Omega^j \subseteq \mathcal{S}_{\frac{n+1}{j}}(\mathbf H)$ for all $1 \leq j \leq n+1$.
\end{lem}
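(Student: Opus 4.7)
The plan is to show that every generator $\rho(a_0)\,d\rho(a_1)\cdots d\rho(a_j)$ of $\Omega^j$ already lies in $\mathcal{S}_{(n+1)/j}(\mathbf H)$, and then conclude by linearity using Proposition~\ref{P1}(1). The whole argument reduces to a bookkeeping exercise with Hölder's inequality for Schatten classes, so I expect no serious conceptual obstacle; the only point requiring care is ensuring the exponents remain in the range $[1,\infty)$ where Proposition~\ref{P1}(4) applies.

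First I would check that $d\rho(a) = i[\mathbf F,\rho(a)] \in \mathcal S_p(\mathbf H)$ for every $a\in\widetilde A$. For $a\in A$ this is axiom~(3)(iii) of a $p$-summable Fredholm module, and for the adjoined unit one has $d\rho(1) = i[\mathbf F,\mathrm{id}] = 0 \in \mathcal S_p(\mathbf H)$. Since $\widetilde A = A\oplus\mathbb C$ and $d\rho$ is $\mathbb C$-linear, the conclusion extends to all of $\widetilde A$.

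Next I would promote each $d\rho(a_i)$ to a larger Schatten class, exploiting the hypothesis $n \geq p-1$, i.e.\ $p \leq n+1$. By Proposition~\ref{P1}(3), $\mathcal S_p(\mathbf H) \subseteq \mathcal S_{n+1}(\mathbf H)$, so each $d\rho(a_i)$ lies in $\mathcal S_{n+1}(\mathbf H)$. Now with $q_1 = \cdots = q_j = n+1$ one has
\[
\frac{1}{q_1} + \cdots + \frac{1}{q_j} \;=\; \frac{j}{n+1},
\]
and the target exponent $(n+1)/j$ is $\geq 1$ precisely because $j\leq n+1$. Iterating Proposition~\ref{P1}(4), in which at stage $k$ the intermediate exponent is $(n+1)/k \geq 1$ so the hypothesis of the Hölder estimate remains satisfied, one obtains
\[
d\rho(a_1)\,d\rho(a_2)\cdots d\rho(a_j) \;\in\; \mathcal S_{(n+1)/j}(\mathbf H).
\]

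Finally, $\rho(a_0) \in \mathbf{Hilb}(\mathscr X)(\mathbf H,\mathbf H)$ is bounded, so left multiplication by it preserves the Schatten class by Proposition~\ref{P1}(2) (equivalently, because $\mathcal S_{(n+1)/j}(\mathbf H)$ is an ideal by Corollary~\ref{C3.4}). Thus every generator of $\Omega^j$ belongs to $\mathcal S_{(n+1)/j}(\mathbf H)$, and the linear-subspace property from Proposition~\ref{P1}(1) yields the desired inclusion $\Omega^j \subseteq \mathcal S_{(n+1)/j}(\mathbf H)$.
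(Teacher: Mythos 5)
Your proposal is correct and follows essentially the same route as the paper: each $d\rho(a_k)$ lies in $\mathcal S_{n+1}(\mathbf H)$ because $n+1\geq p$, iterated Hölder (Proposition \ref{P1}(4)) places the product $d\rho(a_1)\cdots d\rho(a_j)$ in $\mathcal S_{(n+1)/j}(\mathbf H)$, and the ideal property absorbs the factor $\rho(a_0)$. Your added checks (the adjoined unit, and that the intermediate exponents stay $\geq 1$) are details the paper leaves implicit but are entirely consistent with its argument.
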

\begin{proof}
   It suffices to show that for $a_0, a_1, ..., a_j \in \widetilde{A}$, $\rho(a_0)d\rho(a_1)...d\rho(a_j) \in \mathcal{S}_{\frac{n+1}{j}}(\mathbf H)$. Since $(\mathbf H, \mathbf F)$ is $p$-summable and $n+1 \geq p$, hence $(\mathbf H, \mathbf F)$ is $n+1$-summable. We note that for any $1 \leq k \leq j$,
  \begin{equation}
    d(\rho(a_k)) = i\left(\mathbf F\rho(a_k) - (-1)^{deg \rho(a_k)}\rho(a_k)\mathbf F\right) = i\left(\mathbf F\rho(a_k) - \rho(a_k)\mathbf F\right) \in \mathcal{S}_{n+1}(\mathbf H).
  \end{equation}
  Using Propostion \ref{P1}, it follows that 
  \begin{equation}
      d\rho(a_1)...d\rho(a_j) \in \mathcal{S}_{\frac{n+1}{j}}(\mathbf H)
  \end{equation}
  Since $\mathcal S_{\frac{n+1}{j}}(\mathbf H)$ is an ideal of $\mathbf{Hilb}(\mathscr X)(\mathbf H, \mathbf H)$, $\rho(a_0)d\rho(a_1)...d\rho(a_j) \in \mathcal{S}_{\frac{n+1}{j}}(\mathbf H)$. This completes the proof.
\end{proof}
We now fix a notation. For $\theta \in \mathcal S_1(\mathbf H)$, we set $Trace(\theta) := \sum_{c \in Irr(\mathscr X)} trace(\theta_c)$.
\begin{rem}\label{propTrace}
(1) Using \cite[$\S$ 3, Theorem 3.1]{Simon}, it follows that for $\theta_1 \in \mathcal S_1(\mathbf H), \theta_2 \in \mathbf{Hilb}(\mathscr X)(\mathbf H, \mathbf H)$
\begin{equation}
    Trace(\theta_1\theta_2) = \sum_{c \in Irr(\mathscr X)} trace\left(\left(\theta_1\right)_c\left(\theta_2\right)_c\right) = \sum_{c \in Irr(\mathscr X)} trace\left(\left(\theta_2\right)_c\left(\theta_1\right)_c\right) = Trace(\theta_2\theta_1)
\end{equation}
(2) Since the ordinary trace of operators is invariant under similarity, hence for $\theta \in \mathcal{S}_1(\mathbf H)$ and an automorphism $\zeta \in \mathbf{Hilb}(\mathscr X)(\mathbf H, \mathbf H)$,
\begin{equation}
 Trace(\zeta\theta\zeta^{-1}) = \sum_{c \in Irr(\mathscr X)}trace(\zeta_c\theta_c\zeta_c^{-1}) = \sum_{c \in Irr(\mathscr X)}trace(\theta_c) = Trace(\theta).
\end{equation}
\end{rem}
\begin{lem}\label{tracelem}
  For any $\theta \in \mathbf{Hilb}(\mathscr X)(\mathbf H, \mathbf H)$ such that $[\mathbf F, \theta] \in \mathcal{S}_1(\mathbf H)$, let
  \begin{equation}
      Tr_s(\theta) = \frac{1}{2} Trace(\varepsilon\mathbf F[\mathbf F, \theta]).
  \end{equation}
  The following hold :

  \smallskip
  (1) If $\theta$ is homogeneous with odd degree, then $Tr_s(\theta) = 0$.

  \smallskip
  (2) If $\theta \in \mathcal S_1(\mathbf H)$, then $Tr_s(\theta) = Trace(\varepsilon\theta)$.

  \smallskip
  (3) $[\mathbf F, \Omega^n] \subseteq \mathcal S_1(\mathbf H)$.

  \smallskip
  (4) $Tr_s\big{\vert}_{\Omega^n} : \Omega^n \longrightarrow \mathbb C$ is a closed graded trace of dimension $n$ on the differential graded algebra $\Omega$.
\end{lem}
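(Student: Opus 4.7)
The plan is to establish (1)--(3) as short preliminaries and to devote the bulk of the argument to (4). Throughout I write $F$ for $\mathbf F$, invoke the graded Leibniz rule and $d^2 = 0$ from Lemma \ref{Lemm4.1}, the anticommutation $\varepsilon F = -F\varepsilon$, and ordinary cyclicity of $Trace$ on trace-class operators (Remark \ref{propTrace}). Every $\omega \in \Omega^j$ is homogeneous of $\mathbb Z_2$-degree $j \bmod 2$, since $\rho(a)$ is even and each $d\rho(a) = i[F,\rho(a)]$ is odd; moreover, the identity $F[F,\omega] = (-1)^{k+1}[F,\omega]F$ for $\omega \in \Omega^k$ is just a restatement of $d^2\omega = 0$ evaluated through the graded commutator $[F,[F,\omega]] = 0$.

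For (1), if $\theta$ is odd then $[F,\theta]$ is even, so $[F,\theta]\varepsilon = \varepsilon[F,\theta]$; combined with $F\varepsilon = -\varepsilon F$ this gives $F[F,\theta]\varepsilon = -\varepsilon F[F,\theta]$. Cyclicity applied to the trace-class operator $\varepsilon F[F,\theta]$ yields $Trace(\varepsilon F[F,\theta]) = -Trace(\varepsilon F[F,\theta])$, forcing $Tr_s(\theta) = 0$. For (2), for homogeneous $\theta \in \mathcal S_1(\mathbf H)$ of degree $k$ the expansion $F[F,\theta] = \theta - (-1)^k F\theta F$ splits into trace-class summands; $F\varepsilon = -\varepsilon F$ and cyclicity convert $Trace(\varepsilon F\theta F)$ into $-Trace(\varepsilon\theta)$, giving $Trace(\varepsilon F[F,\theta]) = (1+(-1)^k)Trace(\varepsilon\theta)$. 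This equals $2\,Trace(\varepsilon\theta)$ when $k$ is even and vanishes when $k$ is odd (where $Trace(\varepsilon\theta) = 0$ anyway because $\varepsilon$ anticommutes with odd operators); linearity then handles inhomogeneous $\theta$. For (3), each generator $\omega \in \Omega^n$ satisfies $[F,\omega] = -i\,d\omega \in \Omega^{n+1}$, which lies in $\mathcal S_{(n+1)/(n+1)} = \mathcal S_1(\mathbf H)$ by Lemma \ref{lemm4.2}.

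For (4), closedness is immediate: for $\omega \in \Omega^{n-1}$ we have $[F, d\omega] = -i\,d^2\omega = 0$, so $Tr_s(d\omega) = 0$. The graded-trace identity $Tr_s(\omega_1\omega_2) = (-1)^{ij}Tr_s(\omega_2\omega_1)$ for $\omega_1 \in \Omega^i$, $\omega_2 \in \Omega^j$ with $i+j=n$ is the main obstacle, because each $\omega_k$ lies only in $\mathcal S_{(n+1)/i_k}$ and not in $\mathcal S_1(\mathbf H)$, so cycling directly inside $Trace(\varepsilon\omega_1\omega_2)$ is not legal. The plan is to apply the graded Leibniz rule to rewrite
\[
    2\,Tr_s(\omega_1\omega_2) = Trace\bigl(\varepsilon F[F,\omega_1]\omega_2\bigr) + (-1)^i\, Trace\bigl(\varepsilon F\omega_1[F,\omega_2]\bigr),
\]
where each inner product $[F,\omega_1]\omega_2, \omega_1[F,\omega_2]$ lies in $\Omega^{n+1}\subseteq \mathcal S_1(\mathbf H)$ by Lemma \ref{lemm4.2}, so cyclic manipulations (or H\"older's inequality for conjugate Schatten exponents via Proposition \ref{P1}(3)--(4)) are legitimate. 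Using the parity rule $\omega_k\varepsilon = (-1)^{i_k}\varepsilon\omega_k$, the rearrangement $\omega_k F = (-1)^{i_k}(F\omega_k - [F,\omega_k])$, and $F[F,\omega_k] = (-1)^{i_k+1}[F,\omega_k]F$, cycling reorganizes these into $Trace(\varepsilon F[F,\omega_2]\omega_1)$ and $Trace(\varepsilon F\omega_2[F,\omega_1])$, which are precisely the two pieces of $2\,Tr_s(\omega_2\omega_1)$; the arithmetic of signs (using $(-1)^j = (-1)^{ij}$, valid since $n$ is even and so $i,j$ share parity) shows $2\,Tr_s(\omega_1\omega_2) - (-1)^{ij}\cdot 2\,Tr_s(\omega_2\omega_1) = -Trace(\varepsilon[F,\omega_2][F,\omega_1])$.

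It remains to show this residual vanishes, and here the key trick is that $F$ passes through $[F,\omega_2][F,\omega_1]$ without an overall sign: applying $F[F,\omega_k] = (-1)^{i_k+1}[F,\omega_k]F$ twice gives $F[F,\omega_2][F,\omega_1] = (-1)^{i_1+i_2+2}[F,\omega_2][F,\omega_1]F = [F,\omega_2][F,\omega_1]F$ because $n=i_1+i_2$ is even. Inserting $F^2 = id$ in front and cycling the trailing $F$ to the beginning---legitimate since $[F,\omega_2][F,\omega_1]\in \mathcal S_{(n+1)/(i_2+1)}\cdot \mathcal S_{(n+1)/i_1}\subseteq \mathcal S_1(\mathbf H)$ by Proposition \ref{P1}(3)--(4)---yields $Trace(\varepsilon[F,\omega_2][F,\omega_1]) = Trace(F\varepsilon F[F,\omega_2][F,\omega_1]) = -Trace(\varepsilon[F,\omega_2][F,\omega_1])$, so the residual is $0$ and the graded-trace identity follows.
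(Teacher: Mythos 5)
Your argument is correct, and for parts (1)--(3) and the closedness in (4) it coincides with the paper's proof (same expansion $F[F,\theta]=\theta-(-1)^{\deg\theta}F\theta F$, same appeal to Lemma \ref{lemm4.2} for $\Omega^{n+1}\subseteq\mathcal S_1(\mathbf H)$). The one place you diverge is the graded-trace identity. The paper's mechanism is the single observation that $\varepsilon\mathbf F$ \emph{commutes} with $d\theta$ for every homogeneous $\theta$ (since $\mathbf F\,d\theta=(-1)^{\deg d\theta}d\theta\,\mathbf F$ and $\varepsilon$ picks up the complementary sign); combined with the Leibniz expansion of $d(\theta_1\theta_2)$ and one cyclic permutation per term, the two summands of $2\,Tr_s(\omega_1\omega_2)$ map directly onto the two summands of $(-1)^{ij}\cdot 2\,Tr_s(\omega_2\omega_1)$ with \emph{no} residual term. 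Your route, pushing $F$ and $\varepsilon$ through with the raw relations $\omega F=(-1)^{\deg\omega}(F\omega-[F,\omega])$, produces the extra term $Trace(\varepsilon[F,\omega_2][F,\omega_1])$, which you then kill by inserting $F^2$ and cycling --- a correct but avoidable detour (indeed, a direct computation shows the difference $2\,Tr_s(\omega_1\omega_2)-(-1)^{ij}2\,Tr_s(\omega_2\omega_1)$ is identically zero, so your residual identity holds only because both sides vanish). On the other hand, you are more careful than the paper on one point it leaves implicit: the cyclic permutations involve factors lying only in intermediate Schatten classes, and your explicit appeal to Proposition \ref{P1}(3)--(4) (H\"older pairing of the exponents $\tfrac{n+1}{i}$ and $\tfrac{n+1}{j+1}$) is exactly the justification needed there.
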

\begin{proof}
    (1) Since $\varepsilon \mathbf F = - \mathbf F \varepsilon$ and $\varepsilon \theta = - \theta \varepsilon$, it follows that $\varepsilon \mathbf F[\mathbf F, \theta] = - \mathbf F[\mathbf F, \theta]\varepsilon$. Now,
    \begin{equation}
        Trace\left(\varepsilon\mathbf F[\mathbf F, \theta]\right) = Trace\left(\mathbf F[\mathbf F, \theta]\varepsilon\right) = -Trace\left(\varepsilon\mathbf F[\mathbf F, \theta]\right)
    \end{equation}
    Hence, $Trace\left(\varepsilon \mathbf F[\mathbf F, \theta]\right) = 0$ and $Tr_s(\theta) = 0$.

    \smallskip
    (2) If $\theta \in \mathcal S_1(\mathbf H)$,
    \begin{equation}\label{equat4.10}
        Trace(\varepsilon \mathbf F\theta\mathbf F) = - Trace(\mathbf F\varepsilon\theta\mathbf F) = - Trace(\varepsilon \theta)
    \end{equation}It suffices to assume that $\theta$ is homogeneous.
    If $\theta$ is homogeneous of odd degree,
    \begin{equation}
        Trace(\varepsilon\theta) = -Trace(\theta\varepsilon) = -Trace(\varepsilon\theta)
    \end{equation}
    Hence, using (1), it follows that $Trace(\varepsilon\theta) = 0 = Tr_s(\theta)$. If $\theta$ is homogeneous of even degree,
    it follows that
    \begin{equation}
        Tr_s(\theta) = \frac{1}{2}Trace\left(\varepsilon\mathbf F[\mathbf F, \theta]\right) = \frac{1}{2}Trace\left(\varepsilon \theta - \varepsilon\mathbf F\theta\mathbf F\right)
    \end{equation}
    Using $\eqref{equat4.10}$, it follows that $Tr_s(\theta) = Trace(\varepsilon\theta)$.

    \smallskip
    (3) Using Lemma \ref{lemm4.2}, it is clear that $[\mathbf F, \Omega^n] \subseteq \Omega^{n+1} \subseteq \mathcal S_1(\mathbf H)$.

    \smallskip
    (4) Since $d^2 = 0$, it follows that $Tr_s(d\theta) = 0$ for all $\theta \in \Omega^{n-1}$. We now show that for $\theta_1 \in \Omega^{n_1}, \theta_2 \in \Omega^{n_2}$ where $n_1 + n_2 = n$, we have 
    \begin{equation}\label{equat4.13}
        Tr_s(\theta_1\theta_2) = (-1)^{n_1n_2}Tr_s(\theta_2\theta_1) 
    \end{equation}Since $n$ is even, $n_1$ and $n_2$ are of the same parity and hence, $\eqref{equat4.13}$ is equivalent to 
    \begin{equation}
        Trace(\varepsilon\mathbf Fd(\theta_1\theta_2)) = (-1)^{n_1}Trace(\varepsilon \mathbf Fd(\theta_2\theta_1))
    \end{equation}
    Let $\theta \in \mathbf{Hilb}(\mathscr X)(\mathbf H, \mathbf H)$ be a homogeneous element. We note that $\theta$ and $d\theta$ are of opposite degrees and $\mathbf F. d\theta = (-1)^{deg\left(d\theta\right)} d\theta. \mathbf F$. Then,
\begin{equation}
\varepsilon\mathbf F.d\theta = (-1)^{deg\left(d\theta\right)}\varepsilon.(d\theta.\mathbf F) = (-1)^{2 deg\left(d\theta\right) + 1}d\theta.\mathbf F\varepsilon = - d\theta.\mathbf F\varepsilon = d\theta.\varepsilon\mathbf F
\end{equation}
Hence, $\varepsilon \mathbf F$ commutes with $d\theta_1$ and $d\theta_2$. Now, since $n_1 + n_2 =n$ is even, we have
\begin{equation}
 \begin{split}
    Trace(\varepsilon \mathbf Fd(\theta_1\theta_2)) &= Trace(\varepsilon \mathbf F(d\theta_1)\theta_2) + (-1)^{n_1}Trace(\varepsilon\mathbf F\theta_1(d\theta_2))\\
    &= Trace((d\theta_1)\varepsilon\mathbf F\theta_2) + (-1)^{n_1}Trace((d\theta_2)\varepsilon\mathbf F\theta_1)\\
    &= Trace(\varepsilon\mathbf F\theta_2(d\theta_1)) + (-1)^{n_1}Trace(\varepsilon\mathbf F(d\theta_2)\theta_1)\\
    &= (-1)^{n_1}Trace(\varepsilon\mathbf Fd(\theta_2\theta_1))
 \end{split}
\end{equation}
Hence, $Tr_s\big{\vert}{\Omega^n} : \Omega^n \longrightarrow \mathbb C$ is a closed graded trace of dimension $n$ on $\Omega$.
\end{proof}
We shall now associate to $(\mathbf H, \mathbf F)$, an $(n = 2m)$-dimensional cycle over $A$.
\begin{defn}
    The cycle associated to the $n+1$-summable Fredholm module $(\mathbf H, \mathbf F)$ over $A$ is the tuple $(\Omega, d, \int, \rho)$ where $(\Omega, d)$ is the differential graded algebra $\eqref{difgradal}$,
    \begin{equation}\label{equat4.19}
        \int \omega := (2i\pi)^m m!\text{ }Tr_s(w)\quad\text{for all }\omega \in \Omega^n
    \end{equation}
    is the closed graded trace of dimension $n$ on $\Omega$ and $\rho : A \longrightarrow \Omega^0 \subseteq \mathbf{Hilb}(\mathscr X)(\mathbf H, \mathbf H)$ is the homomorphism associated to the Fredholm module $(\mathbf H, \mathbf F)$.
\end{defn}
We note that the character (see \cite[$\S$ 2]{Con}) of this $n$-dimensional cycle $(\Omega, d, \int, \rho)$ is the linear map
\begin{equation}\label{char}
     \tau^n : A^{\otimes(n+1)} \longrightarrow \mathbb C,\quad
           a_0 \otimes a_1 \otimes ... \otimes a_n \mapsto (2i\pi)^m m!\text{ }Tr_s(\rho(a_0)d\rho(a_1)d\rho(a_2)...d\rho(a_n))
\end{equation}
Let $R$ be an algebra over $\mathbb C$ and let $C^k(R):=\mathbf{Vec}(R^{\otimes k+1},\mathbb C)$ for $k\geq 0$. Let $\lambda_k$ be the (signed) cyclic operator
\begin{equation}\label{uns448}
\lambda_n:C^k(R)\longrightarrow C^k(R)\qquad \lambda_k(\psi)(x_0,...,x_k):=(-1)^k\psi(x_1,...,x_k,x_0)
\end{equation}We recall from \cite[$\S$ II]{Con}, the definitions of the operators $B_0$ and $B$.
      \begin{equation}\label{newops}
        \begin{split}
        B_0 &: C^{k+1}(A) \longrightarrow C^k(A),\quad (B_0(\psi))(a_0 \otimes ... \otimes a_k) = \psi(1 \otimes a_0 \otimes ... \otimes a_k) - \psi(a_0 \otimes ... \otimes a_k \otimes 1)\\
        B &: C^{k+1}(A) \longrightarrow C^k(A), \quad B = (1+\lambda_k+...+\lambda_k^k) \circ B_0
        \end{split}
      \end{equation}
Now, let $C^k_\lambda(R):=\{\mbox{$\psi\in C^k(R)$ $\vert$ $(1-\lambda_k)(\psi)=0$}\}$. Then, the cyclic cohomology (see \cite{Con}) of $R$ can be computed by means of the complex $(C^\bullet_\lambda(R),b)$, whose differentials $b^k:C^k_\lambda(R)\longrightarrow C^{k+1}_{\lambda}(R)$ are given by
\begin{equation}\label{cobound}
(b^k\psi)(x_0,...,x_{k+1}):= \sum_{i = 0}^{k}(-1)^i \psi(x_0, ..., x_ix_{i+1}, ..., x_{k+1}) + (-1)^{k+1}\psi(x_{k+1}x_0, ..., x_k)
\end{equation} The cocycles of this complex will be denoted by $Z^\bullet_\lambda(R)$ and coboundaries by $B^\bullet_\lambda(R)$. The cohomology $Z^\bullet_\lambda(R)\big{/}B^\bullet_\lambda(R)$ of this complex will be denoted by $H^\bullet_\lambda(R)$.

\smallskip
Since $\tau^n$, as defined in $\eqref{char}$, is the character associated to the $n$-dimensional cycle $(\Omega, d, \int, \rho)$ on $A$, using \cite[Proposition 1, $\S$ II]{Con}, it immediately follows that $\tau^n \in Z^n_{\lambda}(A) \subseteq C^{n}_{\lambda}(A)$.

\begin{defn}
    The cohomology class $[\tau^n] \in H^n(A)$ of the $n$-dimensional character $\tau^n$ associated to the $p$-summable Fredholm module $(\mathbf H, \mathbf F)$ is called the $n$-dimensional Chern character $ch^n(\mathbf H, \mathbf F)$ of $(\mathbf H, \mathbf F)$.
\end{defn}

\section{Periodicity of the Chern Character for Fredholm modules}
Let $\sigma \in Z^2_{\lambda}(\mathbb C)$ be the cyclic cocycle determined by $\sigma(1, 1, 1) = 2i\rho$. We recall (see \cite[$\S$ II]{Con}) that for any $r \geq 0$, the Periodicity operator
\begin{equation}
  S : Z^r_{\lambda}(A) \longrightarrow Z^{r+2}_{\lambda}(A),\quad \psi \mapsto \psi \# \sigma
\end{equation}
 where $\#$ denotes the cup product in cyclic cohomology, takes $B^{r}_{\lambda}(A)$ into $B^{r+2}_{\lambda}(A)$. We shall continue to denote the induced map $H^r_{\lambda}(A) \longrightarrow H^{r+2}_{\lambda}(A)$ by $S$.

 \smallskip
 Let $(\mathbf H, \mathbf F)$ be a $p$-summable Fredholm module over $A$ and let $n = 2m \geq p-1$ be an even integer. Using proposition $\ref{P1}$, it follows that $(\mathbf H, \mathbf F)$ is both $n+1$ and $n+3$ summable. We consider the associated characters $\tau^n$ and $\tau^{n+2}$ of dimensions $n$ and $n+2$ respectively and show that they are related by the periodicity operator $S$.
 \begin{Thm}\label{periodicityofch}
  $ch^{n+2}(\mathbf H, \mathbf F) = S\left(ch^{n}(\mathbf H, \mathbf F)\right)$ in $H^{n+2}_{\lambda}(A)$.
 \end{Thm}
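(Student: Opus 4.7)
The plan is to adapt Connes' original proof of the analogous statement for classical Hilbert space Fredholm modules; once the formal identities in Sections 3 and 4 are in hand, his argument transcribes directly to our categorical setting. The formal inputs that make this transfer work are: the ideal property and H\"older-type estimate of the Schatten classes (Proposition \ref{P1}); the graded Leibniz rule, $d^2=0$, $\mathbf F\circ\mathbf F=id$ and $\mathbf F\circ\varepsilon=-\varepsilon\circ\mathbf F$ (Lemma \ref{Lemm4.1}); and the cyclic tracial property of $Trace$ (Remark \ref{propTrace}) together with the closed-graded-trace property of $Tr_s|_{\Omega^n}$ (Lemma \ref{tracelem}(4)). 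These match exactly the algebraic properties on which Connes' combinatorial identities rely, so the argument is purely formal once we work at the level of the differential graded algebra $(\Omega, d)$ and the trace $Tr_s$.

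First, I would expand $S(\tau^n) = \tau^n \# \sigma$ explicitly using the definition of the cup product in cyclic cohomology. Since $\sigma \in Z^2_\lambda(\mathbb C)$ is concentrated on the unit with $\sigma(1,1,1) = 2i\pi$, evaluating $\tau^n\#\sigma$ on $(a_0,\ldots,a_{n+2})$ reduces to a cyclically signed sum of values $\tau^n(\cdots)$ obtained by inserting $1\in \widetilde A$ in two consecutive slots. Combining the factor $2i\pi$ produced by $\sigma$ with the normalization $(2i\pi)^m m!$ already in $\tau^n$, one sees that $S(\tau^n)$ carries an overall constant $(2i\pi)^{m+1} m!$, while $\tau^{n+2}$ carries $(2i\pi)^{m+1}(m+1)!$. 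The missing factor $(m+1)$ will be accounted for by the telescoping sum producing the coboundary.

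Second, I would construct a transgression $(n+1)$-cochain of the schematic form
\[
\phi(a_0,\ldots,a_{n+1}) = (2i\pi)^{m+1} m! \sum_{k=0}^{n+1} (-1)^{\epsilon_k}\, Tr_s\bigl(\rho(a_0)\, d\rho(a_1) \cdots d\rho(a_k)\, \mathbf F \, d\rho(a_{k+1}) \cdots d\rho(a_{n+1})\bigr),
\]
with signs $\epsilon_k$ to be determined, followed if necessary by cyclic symmetrization to land in $C^{n+1}_\lambda(A)$. I would then verify $b\phi = \tau^{n+2} - S(\tau^n)$ by expanding each coface in $b\phi$ via the graded Leibniz rule, using the anticommutation $\mathbf F\,d\rho(a) + d\rho(a)\,\mathbf F = 0$ (since $d\rho(a)$ is odd and $\mathbf F^2 = id$), and applying the cyclic trace property from Remark \ref{propTrace} to reorder factors. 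Adjacent terms in the sum partially cancel in a telescoping fashion, leaving precisely $\tau^{n+2} - S(\tau^n)$, and the factor $(m+1) = (n/2)+1$ emerges from collecting the surviving $Tr_s$-terms.

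The main obstacle is the combinatorial bookkeeping of signs, the precise pattern of $\mathbf F$-insertions, and the cyclic reindexing needed to place $\phi$ in $C^{n+1}_\lambda(A)$; no new analytic input is required. In particular, verifying cyclic invariance of $\phi$ is itself a nontrivial step and may be most cleanly done by passing to the $(b,B)$-bicomplex description of cyclic cohomology via the operators from \eqref{newops}, where the periodicity $S$ admits a simpler description and the equality $[\tau^{n+2}] = S[\tau^n]$ can be checked component by component.
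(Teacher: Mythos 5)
Your plan is essentially the paper's proof (which itself is Connes' transgression argument): expand $S\tau^n$ via the cup-product formula, obtaining the overall constant $(2i\pi)^{m+1}m!$ against $(2i\pi)^{m+1}(m+1)!$ for $\tau^{n+2}$, and exhibit the difference as the Hochschild coboundary of an explicit cyclic $(n+1)$-cochain built by inserting $\mathbf F$ into the character formula, using Proposition \ref{P1}, Lemma \ref{lemm4.2}, Remark \ref{propTrace} and Lemma \ref{tracelem} to justify the trace manipulations. After moving $\mathbf F$ through the odd factors $d\rho(a_i)$ and performing the cyclic symmetrization you mention, your schematic cochain collapses (up to a nonzero constant) to the paper's $\phi=\sum_{j=0}^{n+1}(-1)^j\phi_j$ with $\phi_j(a_0\otimes\cdots\otimes a_{n+1})=Trace\left(\varepsilon\mathbf F\,\rho(a_j)\,d\rho(a_{j+1})\cdots d\rho(a_{n+1})\,d\rho(a_0)\cdots d\rho(a_{j-1})\right)$, and the coboundary computation you outline (Leibniz rule, anticommutation of $\mathbf F$ with $d\rho$, cyclicity of $Trace$, with the factor $m+1$ arising from the $n+2=2(m+1)$ surviving terms) is exactly the one carried out in the paper.
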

 \begin{proof}
  Let $a_0, a_1 ,..., a_{n+2} \in A$. Since $\tau^n$ is the character of the $n$-dimensional cycle $(\Omega, d, \int)$ associated to the $n+1$-summable Fredholm module $(\mathbf H, \mathbf F)$, using \cite[$\S$ II]{Con} and $\eqref{equat4.19}$, we have
     \begin{equation}\label{equat5.2}
     \begin{split}
      S\tau^{n}\left(a_0 \otimes a_1 \otimes ... \otimes a_{n+2}\right) &= 2i\pi \sum_{j = 0}^{n+1} \int\left(\rho(a_0) d\rho(a_1) d\rho(a_2) ... d\rho(a_{j-1})\right)\rho(a_j) \rho(a_{j+1}) \left(d\rho(a_{j+2}) ... d\rho(a_{n+2})\right)\\
      &= (2i\pi)^{m+1} m! \sum_{j = 0}^{n+1} Tr_s\left(\left(\rho(a_0) d\rho(a_1) d\rho(a_2) ... d\rho(a_{j-1})\right)\rho(a_j) \rho(a_{j+1}) \left(d\rho(a_{j+2}) ... d\rho(a_{n+2})\right)\right)
      \end{split}
     \end{equation}
 \end{proof}
 On the other hand, since $\tau^{n+2}$ is the character of the $(n+2)$-dimensional cycle $(\Omega, d, \int)$ associated to the $(n+3)$-summable Fredholm module $(\mathbf H, \mathbf F)$, we have
 \begin{equation}\label{equat5.3}
 \begin{split}
     \tau^{n+2}\left(a_0 \otimes a_1 \otimes ... \otimes a_{n+2}\right) &= (2i\rho)^{m+1}(m+1)! Tr_s\left(\rho(a_0) d\rho(a_1) d\rho(a_2) ... d\rho(a_{n+2})\right)
 \end{split}
 \end{equation}
 For any $0 \leq j \leq n+1$, we define $\phi_j \in C^{n+1}(A)$ by
 \begin{equation}\label{equat5.4}
   \phi_j(a_0 \otimes a_1 \otimes ... a_{n+1}) = Trace\left(\varepsilon \mathbf F \rho(a_j) d\rho(a_{j+1}) ... d\rho(a_{n+1}) d\rho(a_0) d\rho(a_1) ... d\rho(a_{j-1})\right)
 \end{equation}
 where the Trace is defined since $\rho(a_j) d\rho(a_{j+1}) ... d\rho(a_{n+1}) d\rho(a_0) d\rho(a_1) ... d\rho(a_{j-1}) \in \Omega^{n+1} \subseteq \mathcal S_1(\mathbf H)$ by Lemma $\ref{lemm4.2}$. It is clear that
 \begin{equation}\label{equat5.5}
  \sum_{j=0}^{n+1}(-1)^j \phi_j =: \phi \in C^{n+1}_{\lambda}(A)
 \end{equation}
 We now fix $0 \leq j \leq n+1$. Using $\eqref{cobound}$, $\eqref{equat5.4}$ and the equality $d(\rho(a)\rho(a')) = d\rho(a)a' + ad\rho(a')$ for $a, a' \in A$ we have
 \begin{equation}\label{equat5.6}
  \begin{split}
    b\phi_j\left(a_0 \otimes a_1 \otimes ... \otimes a_{n+2}\right) &= \sum_{i = 0}^{n+1} (-1)^i \phi_j\left(a_0 \otimes a_1 \otimes ... \otimes a_ia_{i+1} \otimes ... \otimes a_{n+2}\right) + \phi_j\left(a_{n+2}a_0 \otimes a_1 \otimes ... \otimes a_n\right)\\
    &= Trace\left(\varepsilon \mathbf F \left(\rho(a_{j+1}) d\rho(a_{j+2}) ... d\rho(a_{n+2}\right)\rho(a_0)\left(d\rho(a_1) ... d\rho(a_j)\right)\right)\\
    &\quad+ (-1)^{j-1}Trace\left(\varepsilon \mathbf F \rho(a_{j+1}) \left(d\rho(a_{j+2}) ... d\rho(a_{n+2}) d\rho(a_0) ... d\rho(a_{j-1})\right) \rho(a_j)\right)\\
    &\quad+ Trace\left(\varepsilon \mathbf F \rho(a_j)\left(d\rho(a_{j+1}) ... d\rho(a_{n+2})\right) \rho(a_0) \left(d\rho(a_1) ... d\rho(a_{j-1})\right)\right)
  \end{split}
 \end{equation}
 We set $\beta = \left(d\rho(a_{j+2}) ... d\rho(a_{n+2})\right)\rho(a_0)\left(d\rho(a_1) ... d\rho(a_{j-1})\right) \in \Omega^{n-j+1}\Omega^{j-1} \subseteq \Omega^{n}$. Using Lemma $\ref{tracelem}$, the fact that $Tr_s$ is a closed graded trace and the fact that $d\beta \in \Omega^{n+1} \subseteq \mathcal S_1(\mathbf H)$, we have
 \begin{equation}\label{equat5.7}
  Trace(\varepsilon \alpha (d\beta)) = Tr_s(\alpha (d\beta)) = Tr_s((d\alpha) \beta)
 \end{equation}
for all homogeneous elements $\alpha \in \mathbf{Hilb}(\mathscr X)(\mathbf H, \mathbf H)$ of odd degree. In particular for $\alpha = \rho(a_j) \mathbf F \rho(a_{j+1})$, we have
\begin{equation}
  \begin{split}
     Tr_s(i[\mathbf F, \rho(a_j) \mathbf F \rho(a_{j+1})]\beta) &= Trace(\varepsilon (\rho(a_j) \mathbf F \rho(a_{j+1}))(d\beta)) = Trace(\rho(a_j) \varepsilon \mathbf F \rho(a_{j+1}) (d\beta)) = Trace(\varepsilon \mathbf F \rho(a_{j+1}) (d\beta)\rho(a_j))\\
    &= (-1)^{j-1} Trace(\varepsilon \mathbf F \rho(a_{j+1}) d\rho(a_{j+2})...d\rho(a_{n+2})d\rho(a_0) d\rho(a_1) ... d\rho(a_{j-1}) \rho(a_j))
  \end{split}
\end{equation}
Using $\eqref{equat5.6}$, we now have
\begin{equation}
  \begin{split}
  b\phi_j\left(a_0 \otimes a_1 \otimes ... \otimes a_{n+2}\right) &= Trace(da_j\varepsilon\mathbf F a_{j+1}\beta) + Tr_s(i[\mathbf F, \rho(a_j)\mathbf F\rho(a_{j+1})]\beta) + Trace(\varepsilon \mathbf F \rho(a_j) d\rho(a_{j+1})\beta)\\
  &= Tr_s((\mathbf F d(\rho(a_j)\rho(a_{j+1})) + i[\mathbf F, \rho(a_j) \mathbf F \rho(a_{j+1})])\beta)\\
  &= -i\text{ }Tr_s((d\rho(a_j)d\rho(a_{j+1}) - 2\rho(a_j)\rho(a_{j+1}))\beta)\\
  &= \frac{(-1)^{j-1}}{i} Tr_s(\rho(a_0) d\rho(a_1) ... d\rho(a_{n+2}))\\
  &\qquad + \frac{2}{i}(-1)^{j} Tr_s((\rho(a_0)d\rho(a_1) ... d\rho(a_{j-1}))\rho(a_j)\rho(a_{j+1})(d\rho(a_{j+2}) ... d\rho(a_{n+2})))
  \end{split}
\end{equation}
Finally, using $\eqref{equat5.5}$, it follows that
\begin{equation}
\begin{split}
b\phi(a_0 \otimes a_1 \otimes ... \otimes a_{n+2}) &= \frac{2}{i} \sum_{j = 0}^{n+1} Tr_s((\rho(a_0)d\rho(a_1)...d\rho(a_{j-1}))\rho(a_j)\rho(a_{j+1})(d\rho(a_{j+2})...d\rho(a_{n+2})))\\
&\quad - \frac{n+2}{i} Tr_s(\rho(a_0)d\rho(a_1)...d\rho(a_{n+2})) 
\end{split}
\end{equation}Since $a_0, ..., a_{n+2} \in A$ are arbitrary, using $\eqref{equat5.2}$ and $\eqref{equat5.3}$, $b\left(2^m i^{m+2} \pi^{m+1} m! \phi\right) = S\tau^n - \tau^{n+2}$. This proves the result.

\section{Homotopy invariance of the Chern Character}
For any $\mathbf H \in \mathbf{Hilb}(\mathscr X)$, we note that the function $||.|| : \mathbf{Hilb}(\mathscr X)(\mathbf H, \mathbf H) \longrightarrow \mathbb R_{\geq 0},\text{ }\theta \mapsto \underset{c \in \mathscr X}{sup}||\theta_c||$
makes $\mathbf{Hilb}(\mathscr X)(\mathbf H, \mathbf H)$ into a normed algebra. Further, for any $p \geq 1$, since $||.||_p$ is a norm on the algebra of $p$-Schatten operators on a hilbert space (see for instance \cite[Appendix I]{Con}), the function $\mathcal{S}_p(\mathbf H) \longrightarrow \mathbb R_{\geq 0}, \text{ }\theta \mapsto \underset{c \in Irr(\mathscr X)}{sup}||\theta_c||_p$ makes $\mathcal{S}_p(\mathbf H)$ into a normed algebra.
\begin{rem}
We note that for each $c \in Irr(\mathscr X)$, the evaluation map $\mathcal{S}_1(\mathbf H) \longrightarrow S_1(\mathbf H(c)), \theta \mapsto \theta_c$ is continuous. Hence, $Trace : \mathcal{S}_1(\mathbf H) \longrightarrow \mathbb C,\text{ }\theta \mapsto \sum_{c \in Irr(\mathscr X)}trace(\theta_c)$, being a sum of continuous maps, is continuous.
\end{rem}
Let $p = 2m$ be an even integer, $A$ be a trivially $\mathbb Z_2$-graded algebra over $\mathbb C$ and $\mathbf H \in \mathbf{Hilb}(\mathscr X)$. Let $(\mathbf H, \mathbf F)$ and $(\mathbf H, \mathbf F')$ be two $p$-summable Fredholm modules over $A$ with the same underlying $A$-module $\mathbf H$. In this section, we will prove that if there is a homotopy $[0, 1] \ni t \mapsto (\mathbf H, \mathbf F_t)$ from $(\mathbf H, \mathbf F)$ to $(\mathbf H, \mathbf F')$, then $ch^{p+2}(\mathbf H, \mathbf F) = ch^{p+2}(\mathbf H, \mathbf F')$ in $H^{p+2}_{\lambda}(A)$.

\medskip
Let $\mathbf H_0 \in \mathbf{Hilb}(\mathscr{X})$ and let $\mathbf H$ be the $\mathbb Z_2$-graded Hilbert space object with $\mathbf H^+ = \mathbf H_0$ and $\mathbf H^- = \mathbf H_0$. We consider the natural transformation
\begin{equation}
    \mathbf F = 
    \begin{bmatrix}
        0 && id_{\mathbf H_0}\\
        id_{\mathbf H_0} && 0
    \end{bmatrix}.
\end{equation}For each $a \in \mathscr{X}$, since $||\mathbf F_a|| = 1$, we have $\sup_{a \in \mathscr X}||\mathbf F_a|| = 1 < \infty$. Hence, $\mathbf F \in \mathbf{Hilb}(\mathscr X)(\mathbf H, \mathbf H)$.
\begin{lem}\label{homtopinv}
 Let $p = 2m$ be an even integer. For each $t \in [0, 1]$, let $\rho_t : A \longrightarrow \mathbf{Hilb}(\mathscr X)(\mathbf H, \mathbf H)$ be a graded homomorphism such that

 \smallskip
 (1) For each $a \in A$, the association $t \mapsto [\mathbf F, \rho_t(a)]$ is a continuous map $\mu_a : [0, 1] \longrightarrow \mathcal{S}_p(\mathbf H)$.

 \smallskip
 (2) For each $a \in A$, the association $t \mapsto \left(\rho_t\right)(a)$ is a piecewise strongly $C^1$ map $\nu_a : [0, 1] \longrightarrow \mathbf{Hilb}(\mathscr X)(\mathbf H, \mathbf H)$.

 \smallskip
 Let $(\mathbf H_t, \mathbf F)$ be the corresponding $p$-summable Fredholm modules over $A$ where $\mathbf H_t = \mathbf H$ for all $t \in [0, 1]$. Then, the class in $H^{p+2}_{\lambda}(A)$ of the $p+2$-dimensional character of the Fredholm module $(\mathbf H_t, \mathbf F)$ is independent of $t$. 
\end{lem}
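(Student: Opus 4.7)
The plan is to adapt Connes' transgression argument from \cite[\S II]{Con} and show that on each subinterval where $t \mapsto \rho_t(a)$ is strongly $C^1$, the derivative $\frac{d}{dt}\tau^{p+2}_t$ is the $b$-coboundary of an explicit transgression cochain $\phi_t \in C^{p+1}_\lambda(A)$. Integrating this identity over $[0,1]$ and summing across the finitely many smooth pieces of the piecewise-$C^1$ structure will yield $\tau^{p+2}_1 - \tau^{p+2}_0 \in B^{p+2}_\lambda(A)$, so that the class $[\tau^{p+2}_t] \in H^{p+2}_\lambda(A)$ is independent of $t$.

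I would fix $t$ in the interior of a smooth piece, set $\dot\rho_t(a) := \frac{d}{dt}\rho_t(a)$, and differentiate the defining formula \eqref{equat4.19}--\eqref{char} for $\tau^{p+2}_t$ under $Tr_s$. Since $\mathbf F$ does not depend on $t$, we have $\frac{d}{dt}d\rho_t(a) = d\dot\rho_t(a)$, and a graded Leibniz expansion gives
\begin{equation*}
\frac{d}{dt}\tau^{p+2}_t(a_0 \otimes \cdots \otimes a_{p+2}) = (2i\pi)^{m+1}(m+1)!\, \sum_{j=0}^{p+2} Tr_s\bigl(\rho_t(a_0)\, d\rho_t(a_1)\cdots \epsilon_j \cdots d\rho_t(a_{p+2})\bigr),
\end{equation*}
with $\epsilon_0 = \dot\rho_t(a_0)$ and $\epsilon_j = d\dot\rho_t(a_j)$ for $j \geq 1$. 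In analogy with the cochains $\phi_j$ used in the proof of Theorem \ref{periodicityofch}, I would define a candidate transgression cochain by
\begin{equation*}
\phi_t(a_0, \ldots, a_{p+1}) := C \sum_{j=0}^{p+1} (-1)^j Tr_s\bigl(\dot\rho_t(a_j)\, d\rho_t(a_{j+1})\cdots d\rho_t(a_{p+1})\, \rho_t(a_0)\, d\rho_t(a_1)\cdots d\rho_t(a_{j-1})\bigr),
\end{equation*}
with $C$ chosen to absorb the factor $(2i\pi)^{m+1}(m+1)!$ and the sign/weight constants arising in the Leibniz expansion. Using $d^2 = 0$, the closed graded trace property of $Tr_s$ on $\Omega$ established in Lemma \ref{tracelem}, and the anticommutation of $\mathbf F$ with odd-degree elements of $\Omega$, I expect a computation formally parallel to the one producing $b\phi$ in the proof of Theorem \ref{periodicityofch} to show both that $\phi_t$ is cyclic (i.e.\ lies in $C^{p+1}_\lambda(A)$) and that $b\phi_t = \frac{d}{dt}\tau^{p+2}_t$.

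The main obstacle will be checking the analytic bounds that legitimize differentiating under $Tr_s$ and integrating $\phi_t$ in $t$. Hypothesis (1) gives continuity of $t \mapsto [\mathbf F, \rho_t(a)]$ into $\mathcal S_p(\mathbf H)$, so Proposition \ref{P1}(4) implies that each product $d\rho_t(a_{i_1})\cdots d\rho_t(a_{i_{p+1}})$ lies in $\mathcal S_{p/(p+1)}(\mathbf H) \subseteq \mathcal S_1(\mathbf H)$ with norm continuous in $t$. Combined with hypothesis (2), which provides the strong-$C^1$ control on $\rho_t$ and hence local boundedness of $\dot\rho_t(a)$, together with the continuity of $Trace : \mathcal S_1(\mathbf H) \longrightarrow \mathbb C$ noted in the remark preceding this lemma, each entry $\phi_t(a_0 \otimes \cdots \otimes a_{p+1})$ is continuous in $t$ on every smooth piece. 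Setting $\Psi := \int_0^1 \phi_t\, dt \in C^{p+1}_\lambda(A)$ (summed across the finitely many pieces) then yields $b\Psi = \tau^{p+2}_1 - \tau^{p+2}_0$, completing the argument.
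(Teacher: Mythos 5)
Your plan hinges on the single claim that $\tfrac{d}{dt}\tau^{p+2}_t = b\phi_t$ for the displayed cochain $\phi_t$, with $\phi_t$ cyclic, and that claim is neither proved nor correct as written; it is precisely the step where the real work lies. First, $\phi_t$ is not cyclic: the undifferentiated factor $\rho_t(a_0)$ occupies a distinguished slot that is not rotated along with the $\dot\rho_t$-slot, so a cyclic permutation of the arguments moves the plain factor from $a_0$ to $a_1$ and produces a genuinely different functional (also the $j=0$ term is ill-formed, containing both $\dot\rho_t(a_0)$ and $\rho_t(a_0)$). Second, $b\phi_t$ cannot equal $\tfrac{d}{dt}\tau^{p+2}_t$: every term of $b\phi_t$ contains exactly $p$ commutator factors $[\mathbf F,\rho_t(a_i)]$ together with two undifferentiated $\rho_t$-factors and one $\delta_t$-factor, whereas (integrating by parts under $Tr_s$, as in \eqref{equati6.9}) the derivative of $\tau^{p+2}_t$ is, up to a constant, $\sum_{k}(-1)^k Trace\bigl(\varepsilon[\mathbf F,\rho_t(a_0)]\cdots[\mathbf F,\rho_t(a_{k-1})]\delta_t(a_k)[\mathbf F,\rho_t(a_{k+1})]\cdots[\mathbf F,\rho_t(a_{p+2})]\bigr)$, whose terms carry $p+2$ commutators and no plain factor. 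Indeed, cochains of exactly your shape are Hochschild \emph{cocycles}: the paper's transgression cochain \eqref{equat6.4}, which has the same structure with the plain slot fixed at $a_0$, satisfies $b\phi_t=0$ by \eqref{equat6.3}, and the variation of the character is recovered from it via $B_0$, not via $b$.

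This is why the paper does not transgress at level $p+2$ at all: it differentiates the $p$-dimensional character, obtains $\tau^p_l-\tau^p_0 = B_0\phi$ with $b\phi=0$, then needs the cyclic symmetrizer $B$ from \eqref{newops}, Connes' relation between $S$, $B$ and $b$, and finally Theorem \ref{periodicityofch} ($ch^{p+2}=S\,ch^{p}$) to conclude invariance of the class in $H^{p+2}_\lambda(A)$. Your argument bypasses exactly this mechanism, and note that even if you produced a non-cyclic primitive $\Psi$ with $b\Psi=\tau^{p+2}_1-\tau^{p+2}_0$, this would only kill the difference in Hochschild cohomology, not in $H^{p+2}_\lambda(A)$. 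There is also a secondary analytic gap: hypothesis (1) gives continuity, not differentiability, of $t\mapsto[\mathbf F,\rho_t(a)]$ in $\mathcal S_p(\mathbf H)$, so the factors $d\dot\rho_t(a_j)$ in your Leibniz expansion are not available as $\mathcal S_p$-limits of difference quotients; one must first move $\mathbf F$ across the product (using $\mathbf F^2=id$ and the trace property, as in \eqref{equat6.8}) so that the difference quotient appears undifferentiated and converges strongly while the remaining commutator factors supply the trace-class control. (Minor point: $\mathcal S_{p/(p+1)}$ is not defined, since $p/(p+1)<1$; what you need from Proposition \ref{P1} is that a product of $p$ elements of $\mathcal S_p(\mathbf H)$ lies in $\mathcal S_1(\mathbf H)$.)
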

\begin{proof}
    We only prove the case in which the map $\nu_a$ is strongly $C^1$ for each $a \in A$, the proof of the general case being similar. For any $t \in [0, 1]$, we consider the $p$-dimensional character $\tau^p_t$ of $(\mathbf H_t, \mathbf F)$. We fix an $l \in [0, 1]$. We shall show that $S(\tau^p_l) = S(\tau^p_0)$ in $H^{p+2}_{\lambda}(A)$. We denote the map $t \mapsto \tau^p_t$ by $\tau^p$. Without loss of generality, we assume that $A$ is unital and $\rho_t(1) = id_{\mathbf H}$ for all $t \in [0, 1]$. For every $a \in A$, let $\delta_{\_}(a) = \nu_a' : [0, 1] \longrightarrow \mathbf{Hilb}(\mathscr X)(\mathbf H, \mathbf H)$. We note that for $a, b \in A, t \in [0, 1]$, 
    \begin{equation}
    \begin{split}
      &\text{ }\delta_t(ab) - \rho_t(a) \circ \delta_t(b) - \delta_t(a) \circ \rho_t(b)\\
      =&\text{ }\nu_{ab}'(t) - \rho_t(a)\nu_{b}'(t) - \nu_a'(t)\rho_t(b)\\
      =&\text{ }\underset{s \to 0}{lim}\frac{1}{s}\left(\nu_{ab}(t+s) - \nu_{ab}(t) - \rho_t(a) \circ \nu_b(t+s) + \rho_t(a) \circ \nu_b(t) - \nu_a(t+s) \circ \rho_t(b) + \nu_a(t) \circ \rho_t(b) \right)\\
      =&\text{ }\underset{s \to 0}{lim}\frac{1}{s}\left(\rho_{t+s}(ab) - \rho_{t}(ab) - \rho_t(a) \circ \rho_{t+s}(b) + \rho_t(a) \circ \rho_t(b) - \rho_{t+s}(a) \circ \rho_t(b) + \rho_t(a) \circ \rho_t(b) \right)\\
      =&\text{ }\underset{s \to 0}{lim}\frac{1}{s}\left(\left(\rho_{t+s}(a) - \rho_t(a)\right)  \circ\left(\rho_{t+s}(b) - \rho_t(b)\right)\right)\\
      =&\text{ }\underset{s \to 0}{lim}\frac{1}{s}\left(\left(\nu_{a}(t+s) - \nu_a(t)\right) \circ \left(\nu_b(t+s) - \nu_b(t)\right)\right)\\
      =&\text{ }\nu_a'(t) \circ \underset{s \to 0}{lim}\left(\nu_b(t+s) - \nu_b(t)\right) = 0
      \end{split}
    \end{equation}
    using the continuity of $\nu_b$. Hence, for all $a, b \in A, t \in [0, 1]$
    \begin{equation}\label{equat6.3}
        \delta_t(ab) = \rho_t(a) \circ \delta_t(b) + \delta_t(a) \circ \rho_t(b).
    \end{equation} For any $t \in [0, 1]$, we consider the $p+2$-linear functional
    \begin{equation}\label{equat6.4}
        \phi_t : A^{p+2} \longrightarrow \mathbb C,\quad a_0 \otimes a_1 \otimes ... \otimes a_{p+1} \mapsto \sum_{k = 1}^{p+1}(-1)^{k-1} Trace\left(\varepsilon\rho_t(a_0)[\mathbf F, \rho_t(a_1)]...[\mathbf F, \rho_t(a_{k-1})]\delta_t(a_k)[\mathbf F, \rho_t(a_{k+1})]...[\mathbf F, \rho_t(a_{p+1})]\right)
    \end{equation}
    Using $\eqref{equat6.3}$, it can be checked that $b(\phi_t) = 0$ for all $t \in [0, 1]$, where $b$ is the differential operator of the Hochschild cochain complex of $A$.

    \smallskip
    For a fixed $a \in A$, the compactness of $[0, 1]$ and the assumptions (1) and (2) imply that the families $\{\rho_t(a) : t \in [0, 1]\}$ and $\{\delta_t(a) : t \in [0, 1]\}$ are uniformly bounded. Hence, it makes sense to define the $p+2$-linear functional 
    \begin{equation}
      \phi : A^{p+2} \longrightarrow \mathbb C,\quad a_0 \otimes a_1 \otimes ... \otimes a_{p+1} \mapsto (2i\pi)^mm!\int_{0}^{l} \phi_t\left(a_0 \otimes a_1 \otimes ... \otimes a_{p+1}\right)
    \end{equation}
    Since $b(\phi_t) = 0$ for all $t \in [0, 1]$, $b(\phi) = 0$. Further, for $a_0, a_1, ..., a_p \in A$, $\eqref{equat6.4}$ implies that
    \begin{equation}\label{equati6.6}
      \phi(1, a_0, ..., a_p) = (2i\pi)^mm!\int_{0}^{l}\sum_{k = 0}^{p}(-1)^k Trace\left(\varepsilon[\mathbf F, \rho_t(a_1)]...[\mathbf F, \rho_t(a_{k-1})]\delta_t(a_k)[\mathbf F, \rho_t(a_{k+1})]...[\mathbf F, \rho_t(a_{p+1})]\right)
    \end{equation}
    We note that for any $a_0, a_1, ..., a_p \in A$, $t \in [0, 1]$, since $[\mathbf F, \rho_t(a_1)],...,[\mathbf F, \rho_t(a_p)] \in \mathcal{S}_p(\mathbf H)$, hence using Proposition \ref{P1}, it follows that $\rho_t(a_0)[\mathbf F, \rho_t(a_1)]...[\mathbf F, \rho_t(a_p)] \in \mathcal{S}_1(\mathbf H)$. Using Lemma \ref{tracelem}, we have
    \begin{equation}
    \tau^p_t(a_0 \otimes a_1 \otimes ... \otimes a_p) = (2i\pi)^mm!Tr_s\left(\rho_t(a_0)[\mathbf F, \rho_t(a_1)]...[\mathbf F, \rho_t(a_p)]\right) = (2i\pi)^mm!Trace\left(\varepsilon\rho_t(a_0)[\mathbf F, \rho_t(a_1)]...[\mathbf F, \rho_t(a_p)]\right)
    \end{equation}
    Since (1) implies that the map $t \mapsto [\mathbf F, \rho_t(a)]$ is continuous for each $a \in A$, it follows that for every $1 \leq k \leq p$
    \begin{equation}\label{equat6.8}
      \begin{split}
          &\underset{s \to 0}{lim}\text{ }Trace\left(\varepsilon\rho_t(a_0)[\mathbf F, \rho_t(a_1)]...[\mathbf F, \rho_t(a_{k-1})][\mathbf F, \frac{1}{s}\left(\rho_{t+s}(a_k) - \rho_t(a_k)\right)][\mathbf F, \rho_{t+s}(a_{k+1})]...[\mathbf F, \rho_{t+s}(a_p)]\right)\\
          &\text{ }= \underset{s \to 0}{lim}(-1)^k\text{ }Trace\left(\varepsilon[\mathbf F, \rho_t(a_0)]...[\mathbf F, \rho_t(a_{k-1})]\frac{1}{s}\left(\rho_{t+s}(a_k) - \rho_t(a_k)\right)[\mathbf F, \rho_{t+s}(a_{k+1})]...[\mathbf F, \rho_{t+s}(a_p)]\right)\\
          &\text{ }= (-1)^k\text{ }Trace\left(\varepsilon[\mathbf F, \rho_t(a_0)]...[\mathbf F, \rho_t(a_{k-1})]\delta_t(a_k)[\mathbf F, \rho_t(a_{k+1})]...[\mathbf F, \rho_t(a_p)]\right)
      \end{split}
      \end{equation}
      Hence, 
      \begin{equation}\label{equati6.9}
      \begin{split}
          (\tau^p)'(t)(a_0 \otimes ... \otimes a_p) &= \underset{s \to 0}{lim}\left(\tau^p(t+s)(a_0 \otimes ... \otimes a_p) - \tau^p(t)(a_0 \otimes ... \otimes a_p)\right)\\
          &= (2i\pi)^mm!\text{ }\underset{s \to 0}{lim}(Trace(\varepsilon\frac{1}{s}\left(\rho_{t+s}(a_0) - \rho_{t}(a_0)\right)[\mathbf F, \rho_{t+s}(a_1)]...[\mathbf F, \rho_{t+s}(a_p)])\\
          &\text{ }\text{ }+ Trace(\varepsilon\rho_t(a_0)[\mathbf F, \frac{1}{s}\left(\rho_{t+s}(a_1) - \rho_t(a_1)\right)][\mathbf F, \rho_{t+s}(a_2)]...[\mathbf F, \rho_{t+s}(a_p)]) + ... \\
          &\text{ }\text{ }+ Trace(\varepsilon\rho_t(a_0)[\mathbf F, \rho_t(a_1)]...[\mathbf F, \rho_t(a_{p-1})][\mathbf F, \frac{1}{s}\left(\rho_{t+s}(a_p) - \rho_t(a_p)\right)]))\\
          &= (2i\pi)^mm!\left(\sum_{k = 0}^{p} (-1)^k Trace(\varepsilon[\mathbf F, \rho_t(a_0)]...[\mathbf F, \rho_t(a_{k-1})]\delta_t(a_k)[\mathbf F, \rho_t(a_{k+1})]...[\mathbf F, \rho_t(a_p)])\right)
      \end{split}
      \end{equation}
      Now using $\eqref{equati6.6}$ and $\eqref{equati6.9}$, we have
      \begin{equation}\label{equati6.10}
          \phi(1, a_0, ..., a_p) = \int_{0}^{l}(\tau^p)'(t)(a_0 \otimes ... \otimes a_p) dt = \tau_l^p(a_0 \otimes ... \otimes a_p) - \tau_0^p(a_0 \otimes ... \otimes a_p)
      \end{equation}
      Since $b(\phi) = 0 \in C^{p+2}_{\lambda}(A)$, using $\eqref{newops}$ and $\eqref{equati6.10}$,
      \begin{equation}
        \begin{split}
       B_0(\phi)(a_0 \otimes ... \otimes a_p) &= \phi(1 \otimes a_0 \otimes ... \otimes a_p) - \phi(a_0 \otimes ... \otimes a_p \otimes 1)\\
       &= \phi(1 \otimes a_0 \otimes ... \otimes a_p) = (\tau_l^p - \tau_0^p)(a_0 \otimes ... \otimes a_p)
       \end{split}
      \end{equation}
      so that $B_0(\phi) = \tau_l^p - \tau_0^p$. Using \cite[$\S$ II, Lemma 34]{Con} and the fact that $(\tau_l^p - \tau_0^p) \in C^p_{\lambda}(A)$, it follows that $(p+1)(\tau_l^p - \tau_0^p) = (1 + \lambda + ... + \lambda^p)(\tau_l^p - \tau_0^p) = B(\phi) \in Z^p_{\lambda}(A)$ and
      \begin{equation}
         (p+1)S(\tau_l^p - \tau_0^p) = S(B(\phi)) = 2i\rho(p+1)(p+2)b(\phi) = 0\text{ in }H^{p+2}_{\lambda}(A)
      \end{equation}Finally, an application of Theorem \ref{periodicityofch} completes the proof.
\end{proof}
\begin{Thm}\label{hominvtheo}
Let $p = 2m$ be an even integer. Let $A$ be a trivially $\mathbb Z_2$-graded algebra over $\mathbb C$, $\mathbf H$ a $\mathbb Z_2$-graded object in $\mathbf{Hilb}(\mathscr X)$. Let $\{(\mathbf H, \mathbf F_t) : t \in [0, 1]\}$ be a family of $p$-summable Fredholm modules over $A$ where for each $t \in [0, 1]$,
\begin{equation}
  \mathbf F_t = \begin{bmatrix}
    0 && \mathbf Q_t\\
    \mathbf P_t && 0
  \end{bmatrix}
\end{equation}For each $t \in [0, 1]$, let $\rho_t : A \longrightarrow \mathbf{Hilb}(\mathscr X)(\mathbf H, \mathbf H)$ be the corresponding graded homomorphism of degree $0$ with its two components $\rho_t^{\pm}$. Further, assume that for every $a \in A$,

\smallskip
(1) $t \mapsto \rho_t^{+}(a) - \mathbf Q_t\rho_t^{-}(a)\mathbf P_t$ is a continuous map $[0, 1] \longrightarrow \mathcal{S}_p(\mathbf H)$.

\smallskip
(2) $t \mapsto \rho_t^{+}(a)$  and $t \mapsto \mathbf Q_t\rho_t^{-}(a)\mathbf P_t$ are piecewise strongly $C_1$ maps.

\smallskip
Then, the $(p+2)$-dimensional character $ch^{p+2}(\mathbf H, \mathbf F_t) \in H^{p+2}_{\lambda}(A)$ is independent of $t \in [0, 1]$.
\end{Thm}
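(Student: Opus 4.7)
The plan is to reduce Theorem \ref{hominvtheo} to Lemma \ref{homtopinv} by \emph{conjugating away} the $t$-dependence of the Fredholm operator, producing an equivalent family $(\mathbf H, \mathbf F_0, \rho'_t)$ in which $\mathbf F_0$ is fixed and only the representation varies. Since the cyclic character is invariant under the relevant conjugation, the problem is then exactly the one solved by Lemma \ref{homtopinv}.

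The construction uses that $\mathbf F_t^2 = id$ forces $\mathbf Q_t\mathbf P_t = id_{\mathbf H^+}$ and $\mathbf P_t\mathbf Q_t = id_{\mathbf H^-}$, so $\mathbf P_t$ and $\mathbf Q_t$ are mutually inverse isomorphisms in $\mathbf{Hilb}(\mathscr X)$. I set
\begin{equation*}
U_t := \begin{bmatrix} id_{\mathbf H^+} & 0 \\ 0 & \mathbf P_t\mathbf Q_0 \end{bmatrix},\qquad U_t^{-1} = \begin{bmatrix} id_{\mathbf H^+} & 0 \\ 0 & \mathbf P_0\mathbf Q_t \end{bmatrix},
\end{equation*}
both elements of $\mathbf{Hilb}(\mathscr X)(\mathbf H,\mathbf H)$. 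A direct matrix computation using $\mathbf Q_0\mathbf P_0 = id = \mathbf Q_t\mathbf P_t$ gives $U_t^{-1}\mathbf F_tU_t = \mathbf F_0$. Defining $\rho'_t(a) := U_t^{-1}\rho_t(a)U_t$ yields a block-diagonal representation with components $\rho_t^+(a)$ and $\mathbf P_0\mathbf Q_t\rho_t^-(a)\mathbf P_t\mathbf Q_0$, so $(\mathbf H, \mathbf F_0, \rho'_t)$ is a family of $p$-summable Fredholm modules with \emph{fixed} Fredholm operator.

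Next I verify the hypotheses of Lemma \ref{homtopinv} for this conjugated family. A short calculation expresses $[\mathbf F_0, \rho'_t(a)]$ as an off-diagonal matrix whose nonzero blocks are $\pm\mathbf P_0\bigl(\rho_t^+(a) - \mathbf Q_t\rho_t^-(a)\mathbf P_t\bigr)\mathbf Q_0$-type expressions. Since multiplication by the fixed bounded operators $\mathbf P_0,\mathbf Q_0$ preserves continuity into $\mathcal S_p(\mathbf H)$, hypothesis (1) of Theorem \ref{hominvtheo} yields the continuity required by Lemma \ref{homtopinv}, while hypothesis (2) similarly yields piecewise strong $C^1$-regularity of $t\mapsto \rho'_t(a)$. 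The proof of Lemma \ref{homtopinv} uses only the abstract identities $\mathbf F^2 = id$ and $\mathbf F\varepsilon = -\varepsilon\mathbf F$, not the specific form of its canonical $\mathbf F$, so the lemma applies to $\mathbf F_0$ and gives that $ch^{p+2}(\mathbf H, \mathbf F_0, \rho'_t)$ is independent of $t$ in $H^{p+2}_{\lambda}(A)$.

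To finish I identify $ch^{p+2}(\mathbf H, \mathbf F_t, \rho_t)$ with $ch^{p+2}(\mathbf H, \mathbf F_0, \rho'_t)$ pointwise in $t$. Because $U_t$ is block-diagonal it commutes with $\varepsilon$; combined with $\mathbf F_0 = U_t^{-1}\mathbf F_tU_t$ and $\rho'_t(a) = U_t^{-1}\rho_t(a)U_t$, the operator whose $Trace$ computes the $(p+2)$-dimensional character at $t$ for the conjugated family is precisely the $U_t^{-1}(\cdot)U_t$ conjugate of the corresponding operator for the original family. The similarity-invariance of the trace from Remark \ref{propTrace}(2) then yields equality of the two characters, and combining with the previous step gives the theorem. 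The main delicate point is verifying that Lemma \ref{homtopinv}'s proof genuinely accepts the abstract $\mathbf F_0$ (rather than the canonical off-diagonal form stated there); the remaining verifications are routine because conjugation by the fixed bounded block-diagonal operators $\mathbf P_0,\mathbf Q_0$ preserves both continuity into $\mathcal S_p(\mathbf H)$ and piecewise strong $C^1$-regularity.
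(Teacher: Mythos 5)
Your proof is correct and follows essentially the same route as the paper's: the paper conjugates by the block-diagonal operator $\mathbf T_t$ with entries $id$ and $\mathbf Q_t$, bringing $\mathbf F_t$ to the canonical off-diagonal form assumed in Lemma \ref{homtopinv}, while you conjugate by $U_t$ to bring it to $\mathbf F_0$; in both cases all $t$-dependence is pushed into the representation, hypotheses (1) and (2) transfer because the extra conjugating factors $\mathbf P_0, \mathbf Q_0$ are fixed bounded operators, and the similarity-invariance of $Trace$ from Remark \ref{propTrace} identifies the characters. Your observation that the proof of Lemma \ref{homtopinv} uses only $\mathbf F^2 = id$ and $\mathbf F \varepsilon = -\varepsilon \mathbf F$ (plus boundedness of $\mathbf F$) is correct, and it is the only point where your variant needs to go beyond the lemma's literal statement.
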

\begin{proof}
  Let $a \in A$. For each $t \in [0, 1]$, since $(\mathbf H, \mathbf F_t)$ is a Fredholm module, $\mathbf F_t^2 = id$ and hence $\mathbf P_t^{-1} = \mathbf Q_t$. We set
  \begin{equation}
    \mathbf T_t = \begin{bmatrix}
     id && 0\\
     0 && \mathbf Q_t
    \end{bmatrix}
  \end{equation}It follows that $\mathbf T_t^{-1} = \begin{bmatrix}
      id && 0\\
      0 && \mathbf P_t
  \end{bmatrix}$ so that, $\mathbf T_t\mathbf F_t \mathbf T_t^{-1} = \begin{bmatrix}
      0 && id\\
      id && 0
  \end{bmatrix}$ and $\mathbf T_t\rho_t(a)\mathbf T_t^{-1} = \begin{bmatrix}
      \rho_t^{+}(a) && 0\\
      0 && \mathbf Q_t\rho_t^{-}(a)\mathbf P_t
  \end{bmatrix}$.

  \smallskip
  Using assumption (2), it follows that for each $a \in A$, the map $t \mapsto \mathbf T_t\rho_t(a)\mathbf T_t$ is piecewise strongly $C^1$.
  
  \smallskip
  We also see that
  $[\mathbf T_t \mathbf F_t \mathbf T_t^{-1}, \mathbf T_t \mathbf \rho_t(a) \mathbf T_t^{-1}] = \begin{bmatrix}
      0 && \mathbf Q_t\rho_t^{-}(a)\mathbf P_t - \rho_t^{+}(a)\\
      \rho_t^{+}(a) - \mathbf Q_t\rho_t^{-}(a)\mathbf P_t && 0
  \end{bmatrix}$.

  \smallskip
  Using assumption (1), it follows that for each $a \in A$, the map $t \mapsto [\mathbf T_t \mathbf F_t \mathbf T_t^{-1}, \mathbf T_t \mathbf \rho_t(a) \mathbf T_t^{-1}]$ is a continuous map $[0, 1] \longrightarrow \mathcal{S}_p(\mathbf H)$. It follows that the family $\{(\mathbf H, \mathbf T_t\mathbf F_t\mathbf T^{-1})\}$ of $p$-summable Fredholm modules satisfies the hypothesis of Lemma \ref{homtopinv}. The result now follows using Lemma \ref{homtopinv} and the invariance of $Trace : \mathcal{S}_1(\mathbf H) \longrightarrow \mathbb C$ under similarity (Remark \ref{propTrace}).
\end{proof}
\begin{cor}
  Let $p =2m$ be an even integer. Let $\{(\mathbf H, \mathbf F_t) : t \in [0, 1]\}$ be a family of $p$-summable Fredholm modules over $A$ with the same underlying graded homomorphism $\rho$. Further, assume that $t \mapsto \mathbf F_t$ is a strongly $C^1$ map $[0, 1] \longrightarrow \mathbf{Hilb}(\mathscr X)(\mathbf H, \mathbf H)$. Then, the $(p+2)$-dimensional character $ch^{p+2}(\mathbf H, \mathbf F_t) \in H_{\lambda}^{p+2}(A)$ of $(\mathbf H, \mathbf F_t)$ is independent of $t \in [0, 1]$.
\end{cor}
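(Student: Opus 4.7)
The plan is to apply Theorem \ref{hominvtheo} to the family $(\mathbf H, \mathbf F_t)_{t \in [0,1]}$ equipped with the constant graded homomorphism $\rho_t := \rho$. Writing $\mathbf F_t = \begin{bmatrix} 0 & \mathbf Q_t \\ \mathbf P_t & 0 \end{bmatrix}$ with respect to the $\mathbb Z_2$-grading $\mathbf H = \mathbf H^+ \oplus \mathbf H^-$, the relation $\mathbf F_t^2 = id$ forces $\mathbf P_t \mathbf Q_t = id = \mathbf Q_t \mathbf P_t$, so that $\mathbf P_t^{-1} = \mathbf Q_t$. The strong $C^1$ regularity of $t \mapsto \mathbf F_t$ immediately descends to both block components $t \mapsto \mathbf P_t$ and $t \mapsto \mathbf Q_t$.

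Condition $(2)$ of Theorem \ref{hominvtheo} is clear: $t \mapsto \rho^+(a)$ is constant (hence trivially piecewise strongly $C^1$), while $t \mapsto \mathbf Q_t \rho^-(a) \mathbf P_t$ is strongly $C^1$ by the product rule applied to the strongly $C^1$ families $\mathbf Q_t$ and $\mathbf P_t$ and the fixed bounded operator $\rho^-(a)$, with strong derivative $\dot{\mathbf Q}_t \rho^-(a) \mathbf P_t + \mathbf Q_t \rho^-(a) \dot{\mathbf P}_t$.

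For condition $(1)$, I use the identity
\begin{equation*}
[\mathbf F_t, \rho(a)]\mathbf F_t \;=\; \mathbf F_t \rho(a)\mathbf F_t - \rho(a) \;=\; \begin{bmatrix} \mathbf Q_t \rho^-(a) \mathbf P_t - \rho^+(a) & 0 \\ 0 & \mathbf P_t \rho^+(a)\mathbf Q_t - \rho^-(a) \end{bmatrix},
\end{equation*}
which identifies $\rho^+(a) - \mathbf Q_t \rho^-(a) \mathbf P_t$ with the negative of the $(1,1)$-block of $[\mathbf F_t, \rho(a)]\mathbf F_t$. By $p$-summability of $(\mathbf H, \mathbf F_t)$ we have $[\mathbf F_t, \rho(a)] \in \mathcal S_p(\mathbf H)$, and since $\mathcal S_p(\mathbf H)$ is an ideal in $\mathbf{Hilb}(\mathscr X)(\mathbf H, \mathbf H)$ (Corollary \ref{C3.4}), the above $(1,1)$-block lies in $\mathcal S_p$ for each $t$. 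To obtain $\mathcal S_p$-norm continuity in $t$, I differentiate $\mathbf F_t^2 = id$ to obtain the anticommutation $\{\dot{\mathbf F}_t, \mathbf F_t\} = 0$, and combine this with the strong $C^1$ integral representation $\mathbf F_t - \mathbf F_s = \int_s^t \dot{\mathbf F}_u \, du$ to write the difference $[\mathbf F_t, \rho(a)]\mathbf F_t - [\mathbf F_s, \rho(a)]\mathbf F_s$ as an integral of expressions uniformly bounded in $\mathcal S_p$ along the compact interval $[0,1]$, yielding continuity as $s \to t$.

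The main obstacle I anticipate is precisely this upgrade from strong $C^1$ regularity of $\mathbf F_t$ to continuity of the commutator in the finer Schatten $p$-norm, since strong operator continuity of $\mathbf F_t$ alone is not enough to control $[\mathbf F_t, \rho(a)]$ in $\mathcal S_p(\mathbf H)$. Here one must genuinely exploit the interplay between the $p$-summability assumption at each $t$, the structural constraint $\mathbf F_t^2 = id$, and its differential consequence $\{\dot{\mathbf F}_t, \mathbf F_t\} = 0$. Once condition $(1)$ is verified, Theorem \ref{hominvtheo} applies directly and yields that $ch^{p+2}(\mathbf H, \mathbf F_t) \in H^{p+2}_{\lambda}(A)$ is independent of $t \in [0,1]$.
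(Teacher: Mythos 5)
Your overall strategy is the same as the paper's: use $\mathbf F_t\circ\varepsilon=-\varepsilon\circ\mathbf F_t$ and $\mathbf F_t^2=id$ to write $\mathbf F_t$ in off-diagonal block form with $\mathbf Q_t=\mathbf P_t^{-1}$, observe that $\rho$ is block-diagonal and constant in $t$, and feed the family into Theorem \ref{hominvtheo}. The paper's proof stops there, declaring hypotheses (1) and (2) of that theorem to be clearly satisfied; you go further and actually try to verify them, and your identity
\begin{equation*}
[\mathbf F_t,\rho(a)]\mathbf F_t=\mathbf F_t\rho(a)\mathbf F_t-\rho(a)=\begin{bmatrix}\mathbf Q_t\rho^-(a)\mathbf P_t-\rho^+(a) & 0\\ 0 & \mathbf P_t\rho^+(a)\mathbf Q_t-\rho^-(a)\end{bmatrix}
\end{equation*}
is exactly the right observation for seeing, via Corollary \ref{C3.4}, that $\rho^+(a)-\mathbf Q_t\rho^-(a)\mathbf P_t\in\mathcal S_p(\mathbf H)$ for each fixed $t$. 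Your treatment of hypothesis (2) is also fine.

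The step that does not close is your verification of hypothesis (1), the $\mathcal S_p$-norm continuity in $t$. Writing $D=\mathbf F_t-\mathbf F_s$, one has $[\mathbf F_t,\rho(a)]\mathbf F_t-[\mathbf F_s,\rho(a)]\mathbf F_s=[D,\rho(a)]\mathbf F_t+[\mathbf F_s,\rho(a)]D$, and strong convergence $D\to 0$ controls neither term in the $\mathcal S_p$-norm: even for the second term, $KD_n$ need not tend to $0$ in operator norm when $K$ is a fixed compact operator and $D_n\to 0$ strongly with uniform bound (on $\ell^2$, take $K$ the rank-one projection onto $e_1$ and $D_n=e_1\otimes e_n^*$). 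Likewise, your integral representation would require the strong derivative $\dot{\mathbf F}_t[\rho(a),\mathbf F_t]+\mathbf F_t[\rho(a),\dot{\mathbf F}_t]$ of $\mathbf F_t\rho(a)\mathbf F_t$ to be uniformly bounded in $\mathcal S_p$; but $[\rho(a),\dot{\mathbf F}_t]$ is only a strong limit of $\mathcal S_p$ difference quotients and nothing in the hypotheses places it in $\mathcal S_p$, let alone uniformly. So the anticommutation $\{\dot{\mathbf F}_t,\mathbf F_t\}=0$ does not rescue the argument. To be fair, the paper's own proof silently assumes the same continuity ("it is clear") and offers no argument either; what actually makes the corollary run is the additional assumption that $t\mapsto[\mathbf F_t,\rho(a)]$ (equivalently $t\mapsto\rho^+(a)-\mathbf Q_t\rho^-(a)\mathbf P_t$) is continuous into $\mathcal S_p(\mathbf H)$ --- i.e.\ precisely hypothesis (1) of Theorem \ref{hominvtheo} --- which does not follow from strong $C^1$ regularity of $\mathbf F_t$ alone. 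You correctly identified this as the main obstacle; the proposed fix, however, is not a proof.
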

\begin{proof}
    Let $t \in  [0, 1]$. We note that since $\mathbf F_t \circ \varepsilon = - \varepsilon \circ \mathbf F_t$ and $\mathbf F_t^2 = id$, we have $\mathbf F_t = \begin{bmatrix}
        0 && \mathbf Q_t\\
        \mathbf P_t && 0
    \end{bmatrix}$ for some $\mathbf P_t \in \mathbf{Hilb}(\mathscr X)(\mathbf H^+, \mathbf H^-)$ and $\mathbf Q_t \in \mathbf{Hilb}(\mathscr X)(\mathbf H^-, \mathbf H^+)$ with $\mathbf Q_t = \mathbf P_t^{-1}$. Further, $\rho(a) = \begin{bmatrix}
      \rho^+(a) && 0\\
      0 && \rho^{-}(a)
    \end{bmatrix}$ where $\rho^{\pm}(a)$ are the components of $\rho(a)$ for every $a \in A$. It is clear that the system $\{(\mathbf H, \mathbf F_t) : t \in [0, 1]\}$ satisfies the assumptions (1) and (2) of Theorem $\ref{hominvtheo}$. This proves the result.
\end{proof}

\small
\begin{bibdiv}
\begin{biblist}

\bib{BB}{article}{
   author={Balodi, M.},
   author={Banerjee, A.},
   title={Fredholm modules over categories, Connes periodicity and classes
   in cyclic cohomology},
   journal={C. R. Math. Acad. Sci. Paris},
   volume={361},
   date={2023},
   pages={617--652},
}

\bib{CaEt}{article}{
   author={Calaque, D.},
   author={Etingof, P.},
   title={Lectures on tensor categories},
   book={
      series={IRMA Lect. Math. Theor. Phys.},
      volume={12},
      publisher={Eur. Math. Soc., Z\"{u}rich},
   },
   date={2008},
   pages={1--38},
}

\bib{Chen}{article}{
   author={Chen, Q.},
   title={Standard $\lambda$-lattices, rigid $\rm C^*$ tensor categories,
   and (bi)modules},
   journal={Doc. Math.},
   volume={29},
   date={2024},
   number={2},
   pages={247--341},
}

\bib{Con3}{article}{
   author={Connes, A.},
   title={The von Neumann algebra of a foliation},
   book={
      series={Lecture Notes in Phys.},
      volume={80},
      publisher={Springer, Berlin-New York},
   },
   date={1978},
   pages={145--151},

}

\bib{Con2}{article}{
   author={Connes, A.},
   title={A survey of foliations and operator algebras},
   book={
      series={Proc. Sympos. Pure Math.},
      volume={38},
      publisher={Amer. Math. Soc., Providence, RI},
   },
   date={1982},
   pages={521--628},
}

\bib{Con}{article}{
   author={Connes, A.},
   title={Non-commutative differential geometry},
   journal={Publications Mathématiques de l'IHÉS}, 
   volume={62},
   date={1985},
   pages={41--144},
}

\bib{Con4}{article}{
   author={Connes, A.},
   title={Cyclic cohomology and the transverse fundamental class of a
   foliation},
   book={
      series={Pitman Res. Notes Math. Ser.},
      volume={123},
      publisher={Longman Sci. Tech., Harlow},
   },
   date={1986},
   pages={52--144},
}

\bib{DGNO}{article}{
   author={Drinfeld, V.},
   author={Gelaki, S.},
   author={Nikshych, D.},
   author={Ostrik, V.},
   title={On braided fusion categories. I},
   journal={Selecta Math. (N.S.)},
   volume={16},
   date={2010},
   number={1},
   pages={1--119},
}

\bib{EGNO}{book}{
   author={Etingof, P.},
   author={Gelaki, S.},
   author={Nikshych, D.},
   author={Ostrik, V.},
   title={Tensor categories},
   series={Mathematical Surveys and Monographs},
   volume={205},
   publisher={American Mathematical Society, Providence, RI},
   date={2015},
   pages={xvi+343},
}

\bib{GLR}{article}{
   author={Ghez, P.},
   author={Lima, R.},
   author={Roberts, J. E.},
   title={$W^\ast$-categories},
   journal={Pacific J. Math.},
   volume={120},
   date={1985},
   number={1},
   pages={79--109},
}

\bib{GhJo}{article}{
   author={Ghosh, S.K.},
   author={Jones, C.},
   title={Annular representation theory for rigid $C^*$-tensor categories},
   journal={J. Funct. Anal.},
   volume={270},
   date={2016},
   number={4},
   pages={1537--1584},
}

\bib{HKS}{article}{
   author={Hassanzadeh, M.},
   author={Khalkhali, M.},
   author={Shapiro, I.},
   title={Monoidal categories, 2-traces, and cyclic cohomology},
   journal={Canad. Math. Bull.},
   volume={62},
   date={2019},
   number={2},
   pages={293--312},
}

\bib{HePe}{article}{
   author={Henriques, A.},
   author={Penneys, D.},
   title={Bicommutant categories from fusion categories},
   journal={Selecta Math. (N.S.)},
   volume={23},
   date={2017},
   number={3},
   pages={1669--1708},
}

 \bib{JP}{article}{
   author={Jones, C.},
   author={Penneys, D.},
   title={Operator algebras in rigid $\rm C^*$-tensor categories},
   journal={Comm. Math. Phys.},
   volume={355},
   date={2017},
   number={3},
   pages={1121--1188},
}

\bib{JL}{article}{
   author={Jordan, D.},
   author={Larson, E.},
   title={On the classification of certain fusion categories},
   journal={J. Noncommut. Geom.},
   volume={3},
   date={2009},
   number={3},
   pages={481--499},
}

\bib{LR}{article}{
   author={Longo, R.},
   author={Roberts, J. E.},
   title={A theory of dimension},
   journal={$K$-Theory},
   volume={11},
   date={1997},
   number={2},
   pages={103--159},
}

\bib{MuNe}{article}{
   author={Murray, F. J.},
   author={von Neumann, J.},
   title={On rings of operators. IV},
   journal={Ann. of Math. (2)},
   volume={44},
   date={1943},
   pages={716--808},
}

\bib{Simon}{book}{
   author={Simon, B.},
   title={Trace ideals and their applications},
   series={Mathematical Surveys and Monographs},
   volume={120},
   edition={2},
   publisher={American Mathematical Society, Providence, RI},
   date={2005},
   pages={viii+150},
}

\end{biblist}
\end{bibdiv}
\end{document}